\newtheorem{theorem}{Theorem}[section]
\newtheorem*{theorem:mcg}{Theorem~\ref{theorem:mcg}}
\newtheorem*{theorem:main}{Theorem~\ref{theorem:main}}
\newtheorem*{theorem:Aut}{Theorem~\ref{theorem:Aut}}
\newtheorem*{theorem:out}{Theorem~\ref{theorem:out}}
\newtheorem*{theorem:10}{Theorem~\ref{theorem:10}}
\newtheorem{lemma}[theorem]{Lemma}
\newtheorem{corollary}[theorem]{Corollary}
\newtheorem{conjecture}[theorem]{Conjecture}
\newtheorem{proposition}[theorem]{Proposition}
\theoremstyle{definition}
\newtheorem{definition}[theorem]{Definition}
\theoremstyle{remark}
\newtheorem{remark}[theorem]{Remark}
\numberwithin{equation}{section}
\DeclareMathOperator{\Mod}{Mod}
\DeclareMathOperator{\Aut}{Aut}
\DeclareMathOperator{\Out}{Out}
\title{$C^1$ actions on the circle of finite index subgroups of $\Mod(\Sigma_g)$, $\Aut(F_n)$, and $\Out(F_n)$}
\author{Kamlesh Parwani}
\begin{document}

\maketitle

\begin{abstract}
Let $\Sigma_{g}$ be a closed, connected, and oriented surface of genus $g \geq 24$ and  let $\Gamma$ be a finite index subgroup of the mapping class group $\Mod(\Sigma_{g})$ that contains the Torelli group $\mathcal{I}(\Sigma_g)$. Then any orientation preserving $C^1$ action of $\Gamma$ on the circle cannot be faithful.

We also show that if $\Gamma$ is a finite index subgroup of $\Aut(F_n)$, when $n \geq 8$, that contains the subgroup of IA-automorphisms, then any orientation preserving $C^1$ action of $\Gamma$ on the circle  cannot be faithful.

Similarly, if $\Gamma$ is a finite index subgroup of $\Out(F_n)$, when $n \geq 8$, that contains the Torelli group $\mathcal{T}_n$, then any orientation preserving $C^1$ action of $\Gamma$ on the circle  cannot be faithful.

In fact, when $n \geq 10$, any orientation preserving $C^1$ action of a finite index subgroup of $\Aut(F_n)$ or  $\Out(F_n)$ on the circle cannot be faithful.
\end{abstract}

\section{Introduction} 
In a previous article \cite{Parwani}, the author had established the following result.
\begin{theorem} [Parwani]
Let $S$ be a connected orientable surface with finitely many punctures, finitely many boundary components, and genus at least $6.$ Then any $C^1$ action of the mapping class group of $S$ on the circle is trivial.
\end{theorem}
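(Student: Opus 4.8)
The plan is to show that every $C^{1}$ action $\rho\colon\Mod(S)\to\Diff^{1}(S^{1})$ is trivial. Since $\Mod(S)$ is perfect for genus at least $3$ --- this is where the lantern relation enters --- it admits no nontrivial homomorphism to $\mathbb{Z}/2$, so $\rho$ takes values in the orientation-preserving subgroup $\Diff^{1}_{+}(S^{1})$. Two external inputs then do the work. The first is Thurston's stability theorem: a finitely generated group acting by $C^{1}$ diffeomorphisms on a closed interval and fixing an endpoint either fixes a one-sided neighborhood of that endpoint pointwise or surjects onto a nontrivial subgroup of $(\mathbb{R},+)$. Combined with perfectness this yields a \emph{key lemma}: if $\Sigma$ has genus at least $3$, then any $C^{1}$ action of $\Mod(\Sigma)$ on $S^{1}$, or on a closed interval fixing the endpoints, which has a fixed point must be trivial. (Propagate through the fixed set: on a complementary component, a nontrivial germ at an endpoint yields a nontrivial homomorphism to $\mathbb{R}$, contradicting perfectness, while a trivial germ enlarges the fixed set --- so the fixed set is everything.) The second input is Ghys's classification of group actions on the circle: such an action either has a finite orbit, or preserves a probability measure, or has a unique minimal set $\Lambda$ on whose blow-down circle the induced action is minimal and proximal --- and in that last case the centralizer of this proximal minimal action inside $\Homeo_{+}(S^{1})$ is abelian, indeed finite cyclic.

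The device is a pair of commuting subsurface mapping class groups. Since $g\ge 6$, fix disjoint one-holed genus-$3$ subsurfaces $\Sigma_{1},\Sigma_{2}\subseteq S$; then $\Mod(\Sigma_{1})$ and $\Mod(\Sigma_{2})$ are perfect, commute inside $\Mod(S)$, and each contains a Dehn twist about a curve that is nonseparating in $S$, whose normal closure in $\Mod(S)$ is all of $\Mod(S)$. Assume $\rho$ is nontrivial and examine $\rho(\Mod(\Sigma_{1}))$. If it has a finite orbit, orientation preservation forces a cyclic --- hence, by perfectness, trivial --- action on that orbit, so it fixes the orbit pointwise and the key lemma makes it trivial. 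If it preserves a probability measure, the associated rotation number is a homomorphism $\Mod(\Sigma_{1})\to\mathbb{R}/\mathbb{Z}$, hence trivial, so the action is semiconjugate to the trivial one, producing a fixed point or a finite orbit, and again it is trivial. But $\rho(\Mod(\Sigma_{1}))=1$ would make $\ker\rho$ a normal subgroup of $\Mod(S)$ containing a nonseparating Dehn twist, so $\ker\rho=\Mod(S)$ and $\rho=1$, a contradiction. Hence $\rho(\Mod(\Sigma_{1}))$ has a unique minimal set $\Lambda$ with proximal minimal blow-down.

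Because $\rho(\Mod(\Sigma_{2}))$ commutes with $\rho(\Mod(\Sigma_{1}))$, it preserves $\Lambda$ --- the unique minimal set is canonical --- and descends to an action on the blow-down circle that centralizes the proximal minimal action there; this centralizer is abelian, so perfectness of $\Mod(\Sigma_{2})$ kills the descended action. Thus $\rho(\Mod(\Sigma_{2}))$ fixes $\Lambda$ pointwise and preserves each complementary interval of $\Lambda$ setwise. On each such closed interval it acts by $C^{1}$ diffeomorphisms fixing the endpoints --- here is the decisive use of the $C^{1}$ hypothesis, through Thurston stability --- so the key lemma forces it to act trivially there; with fixing $\Lambda$, this gives $\rho(\Mod(\Sigma_{2}))=1$, hence $\rho=1$ as before, the final contradiction.

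I expect the proximal minimal case to be the main obstacle: invoking the circle dynamics correctly --- uniqueness and canonicity of the minimal set, and abelianness of the centralizer of a proximal minimal group --- and checking that the blow-down is tame enough for Thurston stability to apply on the complementary intervals. The genus bound is used sharply: one needs two \emph{disjoint} perfect subsurface mapping class groups, and genus $3$ is the perfectness threshold, so genus $6$ is exactly what is required. Finally, $C^{1}$ cannot be relaxed to $C^{0}$: for example $\Mod(\Sigma_{g,1})$ acts faithfully by homeomorphisms on the Gromov boundary circle of $\pi_{1}(\Sigma_{g})$, and it is precisely Thurston stability that obstructs smoothing such actions.
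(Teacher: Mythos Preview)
Your argument is correct, and it shares its skeleton with the paper's: embed two commuting perfect mapping class groups (of disjoint genus-$3$ one-holed subsurfaces), show that one of them must act trivially, and then use that the normal closure of a nonseparating Dehn twist is all of $\Mod(S)$ to get global triviality. The paper does not reprove this theorem here --- it is quoted from the author's earlier article --- but Section~3 exhibits the method (and the author says explicitly that those arguments mirror the ones in that earlier paper). The divergence from your approach is in the ``no invariant measure'' case. There the paper invokes the theorem of Deroin--Kleptsyn--Navas: if $\Mod(\Sigma_1)$ admits no invariant probability measure, then some $g\in\Mod(\Sigma_1)$ has only hyperbolic, hence finitely many, fixed points; the commuting group $\Mod(\Sigma_2)$ preserves this finite set, so it \emph{does} have an invariant measure, and the first case disposes of it immediately. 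Your route via Ghys's trichotomy --- unique minimal set, proximal minimal blow-down, finite-cyclic centralizer, then Thurston stability gap by gap --- arrives at the same conclusion with more moving parts. The DKN shortcut is cleaner and sidesteps the blow-down analysis entirely; your version has the compensating merit of resting on the older circle-dynamics toolkit and of making the $C^1$ hypothesis visibly load-bearing in the final Thurston-stability step on each complementary interval.
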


In this article, we generalize this result  to  actions of finite index subgroups of mapping class groups that contain the Torelli group.

\begin{theorem} \label{theorem:mcg}
Let $\Sigma_{g}$ be a closed, connected, and oriented surface of genus $g \geq 24$ and  let $\Gamma$ be a finite index subgroup of the mapping class group $\Mod(\Sigma_{g})$ that contains the Torelli group $\mathcal{I}(\Sigma_g)$. Then any orientation preserving $C^1$ action of $\Gamma$ on the circle  cannot be faithful.
\end{theorem}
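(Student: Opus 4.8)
\emph{Proof proposal.} Assume, for contradiction, that $\rho\colon\Gamma\to\Diff^1_+(S^1)$ is a faithful orientation-preserving $C^1$ action. The plan has three parts: produce a point fixed by the whole Torelli group, pass to a finite-index subgroup of $\Gamma$ fixing that point, and then use Thurston's stability theorem together with a cohomology computation to conclude that the action is trivial. Two features of $\Gamma$ are exploited throughout. First, the symplectic representation gives $1\to\mathcal{I}\to\Gamma\to\bar\Gamma\to1$ with $\mathcal{I}=\mathcal{I}(\Sigma_g)$ and $\bar\Gamma$ of finite index in $\mathrm{Sp}(2g,\mathbb{Z})$; since $g\ge2$, the group $\bar\Gamma$ has Kazhdan's property (T) (so finite abelianization, and a finite orbit for every circle action), and by Borel density it is Zariski dense in $\mathrm{Sp}(2g,\mathbb{R})$. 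Second, disjoint essential subsurfaces of $\Sigma_g$ carry commuting subgroups, and the Torelli groups of such subsurfaces lie inside $\mathcal{I}$; the hypothesis $g\ge24$ is there precisely to provide enough room for the subsurface manipulations below.

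If $\rho(\mathcal{I})$ were finite, then $\ker\rho$ would meet $\mathcal{I}$ in an infinite subgroup, contradicting faithfulness; so $\rho(\mathcal{I})$ is infinite. Now I would show $\rho(\mathcal{I})$ has a global fixed point. For disjoint subsurfaces $S_1,S_2$ of genus $\ge2$ with complement of genus $\ge2$, the groups $\rho(\mathcal{I}(S_1))$ and $\rho(\mathcal{I}(S_2))$ are commuting and infinite; since a subgroup of $\Homeo_+(S^1)$ without a finite orbit has finite cyclic centralizer, $\rho(\mathcal{I}(S_1))$ must have a finite orbit, and likewise the Torelli group of every subsurface that is large enough to have a commuting companion. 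Because a finite orbit of a subsurface Torelli group is a finite invariant set for the Torelli groups of all subsurfaces it contains, and because $\mathcal{I}$ is generated by Torelli groups of small subsurfaces that can be arranged to lie inside a couple of large ones, one bootstraps these partial finite orbits to a single finite orbit, and ultimately to a point $p$ fixed by all of $\rho(\mathcal{I})$.

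Since $\mathcal{I}$ is normal in $\Gamma$, the nonempty closed set $\mathrm{Fix}(\rho(\mathcal{I}))$ is $\rho(\Gamma)$-invariant and $\Gamma$ acts on it through $\bar\Gamma$; as $\bar\Gamma$ has a finite orbit for every circle action, a finite-index subgroup $\Gamma_2\le\Gamma$ fixes some $p\in\mathrm{Fix}(\rho(\mathcal{I}))$, and $\Gamma_2$ still contains $\mathcal{I}$ (it is the preimage of a finite-index subgroup of $\bar\Gamma$). Then $\Gamma_2$ acts by $C^1$ diffeomorphisms on the half-open interval immediately to the right of $p$, fixing the endpoint, so by Thurston stability the action there is trivial or admits a nontrivial homomorphism $\psi\colon\Gamma_2\to(\mathbb{R},+)$. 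In the latter case $\psi|_{\mathcal{I}}$ is invariant under conjugation by $\Gamma_2$ (the target being abelian), hence factors through the $\bar\Gamma_2$-coinvariants of $H_1(\mathcal{I};\mathbb{R})$; by Johnson's theorem (using $g\ge3$) this module is the irreducible, nontrivial symplectic representation $\wedge^3 H_{\mathbb{R}}/H_{\mathbb{R}}$, whose coinvariants under the Zariski-dense $\bar\Gamma_2$ vanish. So $\psi$ factors through $\bar\Gamma_2$, which has finite abelianization, forcing $\psi=0$. Thus $\Gamma_2$ acts trivially to the right of $p$; the same on every complementary interval gives $\rho(\Gamma_2)=1$, contradicting faithfulness since $\Gamma_2$ is infinite.

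The step I expect to be the real obstacle is the bootstrapping in the second paragraph: upgrading a finite orbit of $\rho(\mathcal{I})$ to an actual fixed point. Finite orbits of size $>1$ genuinely occur, since when $\Gamma$ is a level-$\ell$ congruence subgroup the coinvariant module above becomes $\wedge^3 H/H$ reduced modulo $\ell$, which is nonzero, so $\mathcal{I}$ can surject $\Gamma$-equivariantly onto $\mathbb{Z}/\ell$ and act on an $\ell$-point orbit with no fixed point. Overcoming this is presumably what forces $g\ge24$ rather than the $g\ge6$ of the full mapping class group case: one needs enough genus to choose subsurfaces whose Torelli groups simultaneously fix a prescribed finite orbit pointwise and generate (together with commuting companions) enough of $\mathcal{I}$ to push the dynamics onto the line, and then to iterate the centralizer argument in this coordinatized form. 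The $\Aut(F_n)$ and $\Out(F_n)$ statements should run on the same template, with IA-automorphisms (resp.\ $\mathcal{T}_n$) and the Johnson homomorphism for free groups in place of their surface analogues, and $\mathrm{GL}(n,\mathbb{Z})$ in place of $\mathrm{Sp}(2g,\mathbb{Z})$.
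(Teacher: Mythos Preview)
Your proposal has a genuine gap, and it is exactly the one you flag: the ``bootstrapping'' from a finite orbit of $\rho(\mathcal{I})$ to an honest global fixed point. This is not a technicality you can expect to fill in later. As you yourself note, $\mathcal{I}$ surjects $\Gamma$-equivariantly onto $\mathbb{Z}/\ell$, so it can act freely on an $\ell$-cycle; nothing in your outline rules this out, and your subsurface/centralizer mechanism only ever produces finite orbits, never fixed points. Your centralizer lemma is also not true as stated (an irrational rotation has no finite orbit and its centralizer is the full rotation group), so even the first step, getting a finite orbit for each $\rho(\mathcal{I}(S_i))$, needs an additional case split on whether there is an invariant probability measure. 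Finally, your guess about the origin of the bound $g\ge 24$ is wrong: it is not a combinatorial constraint on arranging commuting subsurfaces.

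The paper sidesteps the Torelli fixed-point problem entirely. Split $\Sigma_g$ along a separating curve into $\Sigma^1_{g_1}$ and $\Sigma^1_{g_2}$ with $g_1,g_2\ge 12$, set $G=\Gamma\cap\Mod(\Sigma^1_{g_1})$ and $H=\Gamma\cap\Mod(\Sigma^1_{g_2})$, so $G\times H\le\Gamma$. Each of $G,H$ is finite index in its subsurface mapping class group and contains the corresponding Torelli group, so Hain gives $H^1(G,\mathbb{R})=H^1(H,\mathbb{R})=0$; then $[G,G]$ is finite index and contains $[\mathcal{I}(\Sigma^1_{g_1}),\mathcal{I}(\Sigma^1_{g_1})]$, and the Ershov--He theorem (which needs genus $\ge 12$, whence $24=12+12$) gives $H^1([G,G],\mathbb{R})=H^1([H,H],\mathbb{R})=0$. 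Now apply the Deroin--Kleptsyn--Navas dichotomy to the $G$-action: either $G$ has an invariant probability measure, in which case the rotation-number homomorphism and Thurston stability force $[G,G]$ to act trivially; or some $g\in G$ has a nonempty finite set of hyperbolic fixed points, which is then $H$-invariant (since $H$ commutes with $g$), giving $H$ an invariant measure and hence $[H,H]$ acting trivially. Either way the kernel is nontrivial. No global fixed point for $\mathcal{I}$ is ever sought, and the Johnson-homomorphism/coinvariants argument you set up is replaced by the single cohomological input from Ershov--He applied to $[G,G]$ and $[H,H]$.
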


The results in \cite{KKR} show  that one may drop the assumptions of high genus and the finite index subgroup containing the Torelli group under the assumption of extra regularity. For instance, Corollary 1.3 in \cite{KKR} implies that when a surface has genus at least two (and finitely many punctures and boundary components), then any $C^{1+\tau}$ action, with $\tau > 0$, of a finite index subgroup of the mapping class group of the surface on the circle cannot be faithful. The techniques used do not readily apply to $C^1$ actions.

In \cite{Parwani} it was observed that if Ivanov's conjecture is true, that is, every finite index subgroup of the mapping class group has finite abelianization for $g \geq 3$, then there can be no faithful $C^1$ actions for finite index subgroups of the mapping class group when the genus is at least 6.
Theorem \ref{theorem:mcg} follows directly from recent progress in proving Ivanov's conjecture. More precisely, Theorem \ref{theorem:mcg} uses the established cohomological properties of finite index subgroups of the mapping class groups that contain the Torelli group or its commutator subgroup in \cite{EH} and  the following result, which is proved in Section \ref{section:proofs}.

\begin{theorem} \label{theorem:main}
Let $H$ and $G$ be two finitely generated groups such that $H^1(G, \mathbb{R}) = H^1([G,G], \mathbb{R}) =  H^1(H, \mathbb{R}) = H^1([H,H], \mathbb{R}) = 0$. Then for any $C^1$ orientation preserving action of $H \times G$ on the circle, the induced action of either $H \times 1$  or $1 \times G$ factors through an action of a finite cyclic group.
\end{theorem}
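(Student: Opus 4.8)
The plan is to reduce the whole statement to two classical tools: Thurston's $C^1$ stability theorem, which controls the germ of a $C^1$ action at a fixed point, and the Poincar\'e--H\"older--Ghys structure theory of group actions on $S^1$. First I would record a cohomological observation, call it $(\star)$: if $K$ is finitely generated with $H^1(K,\mathbb{R})=H^1([K,K],\mathbb{R})=0$, then every subgroup $A$ with $[K,K]\subseteq A\subseteq K$ satisfies $H^1(A,\mathbb{R})=0$. The point is that a homomorphism $A\to\mathbb{R}$ must kill $[K,K]$ (since $H^1([K,K],\mathbb{R})=0$), hence factor through $A/[K,K]\hookrightarrow K^{\mathrm{ab}}$, which is finite because $K$ is finitely generated with $H^1(K,\mathbb{R})=0$; so the homomorphism vanishes. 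Next I would prove the lemma that does the real work: \emph{a finitely generated group $L$ with $H^1(L,\mathbb{R})=0$ acting on a compact interval by orientation-preserving $C^1$ diffeomorphisms acts trivially.} If $\mathrm{Fix}(L)$ were proper, choose a complementary interval $(a,b)$; the group of germs of $L$ at $a$ is a finitely generated quotient of $L$, hence admits no nontrivial homomorphism to $\mathbb{R}$, so by Thurston stability it is trivial; picking $\varepsilon>0$ with each of finitely many generators fixing $[a,a+\varepsilon]$ pointwise forces $L$ itself to fix $[a,a+\varepsilon]$ pointwise, contradicting the choice of $(a,b)$.

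These two facts give the key reduction. Write $\rho\colon H\times G\to\Diff^1_+(S^1)$ for the action. \textbf{Claim:} if $\rho(H)$ has a finite orbit then $\rho|_{H\times 1}$ factors through a finite cyclic group (and symmetrically for $G$). A finite orbit of size $m$ yields a homomorphism $\phi\colon H\to\mathbb{Z}/m$ recording the induced cyclic shift; its kernel $H_{0}$ contains $[H,H]$, so $H^1(H_{0},\mathbb{R})=0$ by $(\star)$, and $H_{0}$ fixes a point of $S^1$, so cutting $S^1$ at that point and applying the lemma gives $\rho(H_{0})=1$; hence $\rho|_{H\times 1}$ factors through $H/H_{0}\le\mathbb{Z}/m$. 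So everything comes down to proving that at least one of $\rho(H)$, $\rho(G)$ has a finite orbit.

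Suppose not. Then $\rho(G)$ has no finite orbit, hence a unique minimal set $\Lambda$; because $\rho(H)$ centralizes $\rho(G)$ it preserves $\Lambda$, so collapsing the complementary intervals of $\Lambda$ produces a degree-one monotone semiconjugacy $\pi\colon S^1\to S^1$ intertwining $\rho$ with an action $\bar\rho\colon H\times G\to\Homeo_+(S^1)$ for which $\bar\rho(G)$ is minimal and $\bar\rho(H)$ still centralizes $\bar\rho(G)$. Now split into two cases. If $\bar\rho(G)$ preserves a probability measure, it is topologically conjugate to a dense group of rotations, so its centralizer in $\Homeo_+(S^1)$ lies in $SO(2)$; then $\bar\rho(H)$ is a finitely generated abelian subgroup of $SO(2)$, and were it infinite it would surject onto $\mathbb{Z}$, yielding a nonzero homomorphism $H\to\mathbb{R}$, which is excluded. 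If $\bar\rho(G)$ preserves no probability measure, then by minimality every nontrivial homeomorphism commuting with $\bar\rho(G)$ is fixed-point free; such an element cannot have irrational rotation number, since then $\bar\rho(G)$ would preserve that element's unique invariant measure; so it has rational rotation number and hence finite order, while H\"older's theorem forces the centralizer to be abelian, so $\bar\rho(H)$ is finitely generated, abelian and torsion, hence finite. In both cases $\bar\rho(H)$ is a finite group, so it has a finite orbit $\bar O$ on $S^1$; since $\pi\circ\rho(h)=\bar\rho(h)\circ\pi$, the group $\rho(H)$ permutes the finitely many fibers $\pi^{-1}(y)$, $y\in\bar O$, and taking one endpoint from each fiber yields a finite $\rho(H)$-invariant set, contradicting our assumption and finishing the proof.

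I expect the main obstacle to be the third paragraph: correctly passing to the minimal model of the $G$-action, checking that $\rho(H)$ descends through it, and assembling the structure theory of the centralizer of a minimal action (uniqueness of the minimal set, the H\"older--Ghys dichotomy, and the rotation-number argument for commuting elements). By contrast, the cohomological bookkeeping $(\star)$ and the Thurston-stability lemma should be routine once set up, and the only place smoothness is genuinely used is the final appeal to the lemma on intervals.
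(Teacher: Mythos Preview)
Your proof is correct, but it follows a genuinely different route from the paper's. The paper's argument is much shorter because it invokes the Deroin--Kleptsyn--Navas theorem: a countable group acting by orientation-preserving $C^1$ diffeomorphisms of $S^1$ either preserves a probability measure or contains an element whose fixed-point set is finite (all fixed points hyperbolic). Applied to $G$, the first alternative is handled exactly as in your Claim (the rotation-number homomorphism kills $[G,G]$, so $[G,G]$ has a global fixed point in the support of the measure, and Thurston stability plus $H^1([G,G],\mathbb{R})=0$ forces $[G,G]$ to act trivially); in the second alternative, $H$ centralizes that element and hence preserves its finite fixed set, so $H$ acquires an invariant measure and the same reasoning applies to $H$. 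That is the entire proof.

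You replace the DKN black box by the classical structure theory of circle actions: uniqueness of the minimal set, semiconjugacy to the minimal model, the dichotomy ``minimal with invariant measure $\Rightarrow$ conjugate to rotations'' versus ``minimal without invariant measure $\Rightarrow$ centralizer acts freely'', H\"older's theorem, and a rotation-number argument on centralizing elements. This buys you independence from the rather deep DKN theorem, at the price of a longer argument with several more moving parts (and, as you correctly anticipate, the delicate step is checking that $\rho(H)$ descends through the collapse map and then lifting the finite $\bar\rho(H)$-orbit back up). Your observation $(\star)$ and the interval form of the Thurston lemma are essentially the paper's Lemma~2.3 and Lemma~3.3 repackaged; the paper works directly with $[G,G]$ and so never needs the intermediate-subgroup version $(\star)$.
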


Section \ref{section:preliminaries} contains background material needed for $C^1$ actions on the circle. In Section \ref{section:mcg}, finite index subgroups of mapping class groups that contain the Torelli group are investigated before proving Theorem \ref{theorem:mcg}.
In Section 5 and Section 6 we use the same techniques to prove the following.

\begin{theorem} \label{theorem:Aut}
If $\Gamma$ is a finite index subgroup of $\Aut(F_n)$, when $n \geq 8$, that contains the subgroup of IA-automorphisms, then any orientation preserving $C^1$ action of $\Gamma$ on the circle  cannot be faithful.
\end{theorem}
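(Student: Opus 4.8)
The plan is to run the same strategy used for Theorem~\ref{theorem:mcg}, with the direct-product subgroup coming from a free-product splitting of $F_n$ in place of a separating curve. Write $F_n = F_a * F_b$ with $x_1,\dots,x_a$ generating the first free factor and $x_{a+1},\dots,x_n$ the second, where $a$ and $b$ are both chosen at least as large as the threshold needed for the $\Aut(F_n)$-analogue of the cohomological results in \cite{EH}; this is precisely what forces $n\ge 8$. Extending automorphisms by the identity on the complementary free factor embeds $\Aut(F_a)$ (fixing $x_{a+1},\dots,x_n$) and $\Aut(F_b)$ (fixing $x_1,\dots,x_a$) into $\Aut(F_n)$, and a one-line check on generators shows that these two subgroups commute and intersect trivially, so $\Aut(F_a)\times\Aut(F_b)\le\Aut(F_n)$.

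Next I would intersect this product with $\Gamma$. Put $H' = \Gamma\cap(\Aut(F_a)\times 1)$ and $G' = \Gamma\cap(1\times\Aut(F_b))$; each has index at most $[\Aut(F_n):\Gamma]$ in the corresponding factor, and since the two factors commute and meet trivially, $H'\times G'\le\Gamma$. Because an IA-automorphism of a free factor extends (by the identity) to an IA-automorphism of $F_n$, the subgroup $\mathrm{IA}_a$ lies both in $\mathrm{IA}_n\subseteq\Gamma$ and in $\Aut(F_a)\times 1$, hence in $H'$; likewise $\mathrm{IA}_b\subseteq G'$. Thus $H'$ (resp.\ $G'$) is a finitely generated finite-index subgroup of $\Aut(F_a)$ (resp.\ $\Aut(F_b)$) containing the relevant IA-subgroup, so the cohomological input yields $H^1(H',\mathbb{R}) = H^1([H',H'],\mathbb{R}) = H^1(G',\mathbb{R}) = H^1([G',G'],\mathbb{R}) = 0$.

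Now restrict the given $C^1$ action of $\Gamma$ to $H'\times G'$; this is again a $C^1$ orientation-preserving action, so Theorem~\ref{theorem:main} applies and, after possibly swapping the two factors, the induced action of $H'\times 1$ factors through a finite cyclic group $C$. Since $C$ is abelian, the commutator subgroup $[H',H']\times 1$ lies in the kernel of $H'\times 1\to C$, and therefore in the kernel of the $\Gamma$-action on the circle. Finally $\Aut(F_a)$ contains nonabelian free subgroups, so its finite-index subgroup $H'$ does too and $[H',H']\ne 1$; hence the kernel of the $\Gamma$-action is nontrivial and the action cannot be faithful.

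The one genuinely substantial ingredient is the vanishing of real first cohomology for the finite-index subgroups $H'$, $G'$ and for their commutator subgroups, and this is exactly where the numerical hypothesis enters: one needs the $\Aut(F_n)$ counterpart of \cite{EH} to apply at ranks $a$ and $b$, which is why $n=a+b\ge 8$ (and why the IA-free statement, requiring the stronger vanishing for \emph{all} finite-index subgroups, needs $n\ge 10$). The remaining steps — the direct-product embedding, its compatibility with the IA-subgroups, and the extraction of a nontrivial kernel element from Theorem~\ref{theorem:main} — are routine once that cohomological input is available.
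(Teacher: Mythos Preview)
Your argument is correct and is essentially identical to the paper's: the paper also embeds $\Aut(F_4)\times\Aut(F_{n-4})$ in $\Aut(F_n)$ via extension by the identity on the complementary free factor, intersects each factor with $\Gamma$, observes that $\mathrm{IA}_4$ and $\mathrm{IA}_{n-4}$ survive the intersection, invokes the Ershov--He results to get the required first-cohomology vanishing, and then applies Theorem~\ref{theorem:main}. The only cosmetic difference is that the paper fixes the splitting $4+(n-4)$ (the threshold in \cite{EH} being $4$) and phrases the conclusion as ``either $[G,G]$ or $[H,H]$ acts trivially'' rather than unwinding the finite-cyclic quotient as you do.
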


\begin{theorem} \label{theorem:out}
If $\Gamma$ is a finite index subgroup of $\Out(F_n)$, when $n \geq 8$, that contains the Torelli group $\mathcal{T}_n$, then any orientation preserving $C^1$ action of $\Gamma$ on the circle  cannot be faithful.
\end{theorem}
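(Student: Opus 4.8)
The plan is to reproduce, for free groups, the argument behind Theorem~\ref{theorem:mcg}, with free factors of $F_n$ playing the role that disjoint subsurfaces of $\Sigma_g$ play there. Concretely, I would locate inside $\Gamma$ an internal direct product $\Gamma_1\times\Gamma_2$ of two infinite groups, each satisfying the cohomological hypotheses of Theorem~\ref{theorem:main}, and then apply Theorem~\ref{theorem:main} to conclude that the restricted action kills an infinite subgroup of one of the two factors, so that the action of $\Gamma$ is not faithful.

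First I would build the commuting subgroups, which is in fact cleaner for $F_n$ than in the surface case. Fix a free product decomposition $F_n=F_{n_1}*F_{n_2}$ with $n_1=4$ and $n_2=n-4$, so that both ranks are at least $4$ precisely when $n\geq 8$; this is where the hypothesis on $n$ enters. For $i=1,2$ let $A_i\leq\Aut(F_n)$ be the subgroup of automorphisms that carry the $i$-th factor to itself and restrict to the identity on the other; then $A_i\cong\Aut(F_{n_i})$, the groups $A_1$ and $A_2$ commute elementwise, and $A_1\cap A_2=1$. Using malnormality of free factors in $F_n$, I would check that $A_1A_2$ meets $\mathrm{Inn}(F_n)$ trivially, so that the composite $A_1A_2\hookrightarrow\Aut(F_n)\to\Out(F_n)$ is injective; writing $A_i$ again for the images in $\Out(F_n)$, this produces a copy of $\Aut(F_{n_1})\times\Aut(F_{n_2})$ inside $\Out(F_n)$ with commuting factors.

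Next I would pass to $\Gamma$ by setting $\Gamma_i:=\Gamma\cap A_i$. Since $\Gamma$ has finite index in $\Out(F_n)$, each $\Gamma_i$ has finite index in $A_i\cong\Aut(F_{n_i})$. Moreover an IA-automorphism of the $i$-th factor extended by the identity acts trivially on $H_1(F_n;\mathbb{Z})$, so its class in $\Out(F_n)$ lies in $\mathcal{T}_n\subseteq\Gamma$; hence the IA-subgroup of $A_i\cong\Aut(F_{n_i})$ is contained in $\Gamma_i$. Thus $\Gamma_i$ is a finite index subgroup of $\Aut(F_{n_i})$, with $n_i\geq 4$, that contains the IA-subgroup; by the cohomological properties established for such subgroups (the $\Aut(F_m)$ counterpart of the results quoted for $\Mod(\Sigma_g)$, covering subgroups containing the IA-subgroup or its commutator) one gets $H^1(\Gamma_i,\mathbb{R})=0$, and then, since $[\Gamma_i,\Gamma_i]$ is finite index in $\Aut(F_{n_i})$ and still contains the commutator of the IA-subgroup, also $H^1([\Gamma_i,\Gamma_i],\mathbb{R})=0$. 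So $\Gamma_1$ and $\Gamma_2$ meet the hypotheses of Theorem~\ref{theorem:main}.

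Finally, $\Gamma_1$ and $\Gamma_2$ commute and intersect trivially in $\Gamma$, so $\Gamma_1\times\Gamma_2\leq\Gamma$, and any $C^1$ orientation preserving action of $\Gamma$ on the circle restricts to one of $\Gamma_1\times\Gamma_2$. Theorem~\ref{theorem:main} then forces the induced action of $\Gamma_1$ or of $\Gamma_2$ to factor through a finite cyclic group; as that factor is infinite (being of finite index in $\Aut(F_{n_i})$), the kernel of its action is an infinite, in particular nontrivial, subgroup of $\Gamma$ acting trivially on the circle, so the action of $\Gamma$ is not faithful. With Theorem~\ref{theorem:main} and the cohomological input treated as black boxes, the only real work is the malnormality check that yields the embedding $\Aut(F_{n_1})\times\Aut(F_{n_2})\hookrightarrow\Out(F_n)$ and the bookkeeping that each $\Gamma_i$ contains the IA-subgroup of its factor; the genuine difficulty of the paper — the $C^1$ dynamics — is entirely packaged inside Theorem~\ref{theorem:main}, and I expect the only place the bound on $n$ could be forced up is the range in which the cohomological vanishing for finite index subgroups of $\Aut(F_m)$ containing the IA-subgroup is available.
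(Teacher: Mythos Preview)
Your proposal is correct and follows essentially the same strategy as the paper: locate a copy of $\Aut(F_4)\times\Aut(F_{n-4})$, intersect with $\Gamma$, invoke the Ershov--He results for the required cohomological vanishing, and apply Theorem~\ref{theorem:main}. The only organizational difference is that you embed $\Aut(F_4)\times\Aut(F_{n-4})$ directly into $\Out(F_n)$ via the malnormality check and intersect with $\Gamma$ there, whereas the paper instead pulls $\Gamma$ back along $p:\Aut(F_n)\to\Out(F_n)$ to a finite index $\Gamma_A\leq\Aut(F_n)$ containing $IA_n$, intersects with the two factors inside $\Aut(F_n)$ (so Lemma~\ref{lemma:trivial2} applies verbatim), and then pushes the resulting subgroups forward to $\Out(F_n)$, reading off the cohomological vanishing for the images from surjectivity---this trades your malnormality argument for a one-line quotient argument, but the two routes are equivalent.
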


The assumption of the finite index subgroup containing the subgroup of IA automorphisms or the Torelli group can be dropped at the expense of extra regularity. The authors in \cite{BKK} show that any $C^{1+bv}$ action of a finite index subgroup of $\Aut(F_n)$ or $\Out(F_n)$, with $n \geq 3$, cannot be faithful. The proofs in \cite{BKK} rely heavily on Kopell's Lemma which does not hold in the $C^1$ setting (see \cite{Pixton}). 
To the best of our knowledge, this article contains the only results established (up-to-date) for $C^1$ actions on the circle of finite index subgroups of $\Mod(\Sigma_g)$, $\Aut(F_n)$, and $\Out(F_n)$.

In a recent article, the authors in \cite{MW} were able to show that any $C^1$ action of $\Mod(\Sigma_{g,1})$, where $\Sigma_{g,1}$ is a surface of genus at least 3 and has exactly one marked point (puncture), must be trivial. So, actions of the \textit{full} mapping class group on the circle are fairly well understood, and the results in this paper are the first step towards understanding $C^1$ actions of finite index subgroups of mapping class groups. Progress in this direction appears to be inextricably linked to progress towards showing that finite index subgroups of $\Mod(\Sigma_g)$ have finite abelianiztions.

For arbitrary finite index subgroups in $\Aut(F_n)$ and $\Out(F_n)$, a lot more can be said when $n \geq 10.$ Recent results in \cite{KNO} imply that $\Aut(F_n)$ and $\Out(F_n)$ have Kazhdan's property (T), for all $n \geq 5$. This fact combined with Theorem \ref{theorem:main} immediately leads to the following result.

\begin{theorem} \label{theorem:10}
Any $C^1$ orientation preserving action of a finite index subgroup in $\Aut(F_n)$ or $\Out(F_n)$ cannot be faithful when $n \geq 10.$
\end{theorem}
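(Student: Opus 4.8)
The plan is to locate inside an arbitrary finite index subgroup a direct product of two infinite, finitely generated groups satisfying the cohomological hypotheses of Theorem~\ref{theorem:main}, and then to read off the failure of faithfulness from that theorem's conclusion. The product comes from a free product splitting. Write $F_n = F_5 * F_{n-5}$; since $n \geq 10$, both free factors have rank at least $5$. Extending an automorphism of one factor by the identity on the other gives commuting embeddings of $\Aut(F_5)$ and of $\Aut(F_{n-5})$ into $\Aut(F_n)$, hence an embedding $P := \Aut(F_5) \times \Aut(F_{n-5}) \hookrightarrow \Aut(F_n)$. I would check that $P$ still embeds after composing with $\Aut(F_n) \to \Out(F_n)$: if the image of $(\phi, \psi)$ equalled an inner automorphism $c_g$, then $c_g$ would preserve $F_5$, so $g$ would lie in the normalizer of $F_5$ in $F_5 * F_{n-5}$, which is $F_5$ itself; the same reasoning applied to the other factor puts $g$ in $F_{n-5}$, so $g = 1$ and $\phi = \psi = \mathrm{id}$. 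Fix such a copy of $P$ in whichever of $\Aut(F_n)$, $\Out(F_n)$ is under consideration.

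Given a finite index subgroup $\Gamma$, set $Q := \Gamma \cap P$, a finite index subgroup of $P = A \times B$ with $A = \Aut(F_5)$ and $B = \Aut(F_{n-5})$. A finite index subgroup of a direct product contains a direct product of finite index subgroups of the factors: letting $A' \leq A$ and $B' \leq B$ be the images in $A$ and $B$ of $(A \times 1) \cap Q$ and $(1 \times B) \cap Q$ respectively, the commuting subgroups $A' \times 1$ and $1 \times B'$ both lie in $Q$, so $A' \times B' \leq Q \leq \Gamma$, and $A'$, $B'$ have finite index in $A$, $B$. In particular $A'$ and $B'$ are infinite and finitely generated, the latter because $\Aut(F_m)$ is finitely generated.

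Now I invoke Kazhdan's property (T). By \cite{KNO}, $\Aut(F_m)$ has property (T) for every $m \geq 5$, so $A$ and $B$ do, and property (T) is inherited by the finite index subgroups $A'$ and $B'$. A group with property (T) has finite abelianization, so $[A', A']$ has finite index in $A'$; being finite index in a property (T) group, it too has property (T), so $H^1(A', \mathbb{R}) = H^1([A', A'], \mathbb{R}) = 0$, and likewise for $B'$. Thus $H := A'$ and $G := B'$ meet the hypotheses of Theorem~\ref{theorem:main}.

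Finally, suppose for contradiction that $\Gamma$ admits a faithful orientation preserving $C^1$ action on the circle. Restricting it to $A' \times B' \leq \Gamma$ gives a faithful $C^1$ action of $A' \times B'$, so by Theorem~\ref{theorem:main} the restricted action of $A' \times 1$ or of $1 \times B'$ factors through a finite cyclic group; since $A'$ and $B'$ are infinite, the relevant factor then acts with infinite kernel, contradicting faithfulness. Hence no faithful action exists, proving the theorem. The only non-formal points are the injectivity of $P \to \Out(F_n)$ and the sub-product observation; the real obstacle — establishing property (T) for $\Aut(F_m)$ and $\Out(F_m)$ with $m \geq 5$, and proving the dynamical statement Theorem~\ref{theorem:main} — has already been cleared, which is exactly why the result is immediate.
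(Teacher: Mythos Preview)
Your proof is correct and follows essentially the same route as the paper: embed $\Aut(F_5)\times\Aut(F_{n-5})$ in the ambient group, intersect with $\Gamma$ to get finite index subgroups $A'$, $B'$ of the two factors, invoke property~(T) from \cite{KNO} to obtain the vanishing of $H^1$ for $A'$, $B'$, $[A',A']$, $[B',B']$, and apply Theorem~\ref{theorem:main}. The only differences are cosmetic: the paper writes $G=\Gamma\cap\Aut(F_5)$ and $H=\Gamma\cap\Aut(F_{n-5})$ directly (which coincide with your $A'$, $B'$), and for $\Out(F_n)$ it simply asserts that an ``identical argument'' works, whereas you supply the concrete verification that $P\to\Out(F_n)$ is injective --- a detail the paper leaves implicit.
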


When $5 \leq n \leq 9$, progress towards showing that finite index subgroup in $\Aut(F_n)$ or $\Out(F_n)$ cannot act faithfully on the circle appears to be linked to generalizing Navas' result in \cite{Navas} to $C^1$ actions. It is not know if $\Aut(F_4)$ or $\Out(F_4)$ have Kazhdan's property (T) or not, and furthermore, the question if a finite index subgroup of $\Aut(F_4)$ or $\Out(F_4)$ has finite ablelianization or not remains open. The best results are contained in \cite{EH}, and these are used in this paper.

Even though we are only able to prove that these actions cannot be faithful, the conclusion of Theorem 1.1 and the results in \cite{MW} suggest that ``the kernel of the action should be large''.

\begin{conjecture}
Let $\Gamma$ be a finite index subgroup of the mapping class group of $S$, where $S$ is a connected, orientable surface with finitely many punctures, finitely many boundary components, and genus at least $3$.  Then any $C^1$ action of $\Gamma$ on the circle factors through an action of a finite cyclic group. Also, the same conclusion holds when $\Gamma$ is a finite index subgroup of $\Aut(F_n)$ or $\Out(F_n)$, with $n \geq 4.$
\end{conjecture}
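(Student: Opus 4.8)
The plan is to prove the sharper statement that the image $\rho(\Gamma) \subseteq \Homeo^+(S^1)$ of any $C^1$ action $\rho$ is \emph{finite}: since a finite subgroup of $\Homeo^+(S^1)$ is cyclic (topologically conjugate to a group of rotations), this is equivalent to the action factoring through a finite cyclic group. I would split the argument into two steps, (A) produce a finite orbit for $\rho$, and (B) show that the finite-index subgroup fixing such an orbit acts trivially. As this is a conjecture, the proposal is necessarily conditional, and it is worth isolating at the outset which inputs are dynamical and which are cohomological. The cohomological input is the full strength of Ivanov's conjecture for $\Mod(\Sigma_g)$ with $g \geq 3$ and its analogues for $\Aut(F_n)$ and $\Out(F_n)$ with $n \geq 4$, namely that every finite-index subgroup $\Gamma_0$ has finite abelianization, equivalently $H^1(\Gamma_0,\mathbb{R}) = 0$; these are currently known only under the Torelli/IA hypotheses of Theorems~\ref{theorem:mcg}--\ref{theorem:out} (see \cite{EH}).

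For Step~(B), suppose $\rho$ has a finite orbit of cardinality $k$. The rotation-number cocycle yields a homomorphism $\Gamma \to \mathbb{Z}/k\mathbb{Z}$ whose kernel $\Gamma_0$ is finite index and fixes each orbit point, hence acts by $C^1$ diffeomorphisms on each complementary interval $[a,b]$ fixing the endpoints. Here Thurston's stability theorem is the correct tool in the $C^1$ category, and, crucially, it does not require Kopell's Lemma, which fails in this regularity by \cite{Pixton}: if $\Gamma_0$ is finitely generated with $H^1(\Gamma_0,\mathbb{R}) = 0$, then any $C^1$ action of $\Gamma_0$ on $[a,b)$ fixing $a$ is trivial on a neighborhood of $a$, for otherwise the first-order germ would produce a nontrivial homomorphism $\Gamma_0 \to \mathbb{R}$. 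Letting $q$ be the supremum of points $x$ with $[a,x]$ pointwise fixed, and applying the same statement to $\Gamma_0$ at the fixed point $q$, forces $q = b$; thus $\Gamma_0$ acts trivially on each interval and $\rho(\Gamma) = \rho(\Gamma)/\rho(\Gamma_0)$ is a quotient of $\mathbb{Z}/k\mathbb{Z}$. So Step~(B) reduces cleanly to the vanishing $H^1(\Gamma_0,\mathbb{R}) = 0$ for every finite-index $\Gamma_0$.

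Step~(A) is where the genuine difficulty lies. If $\rho$ preserves a probability measure on $S^1$, the rotation number is a homomorphism $\Gamma \to \mathbb{R}/\mathbb{Z}$; as $\Gamma$ is finitely generated with $H^1(\Gamma,\mathbb{R}) = 0$ its abelianization is finite, so this homomorphism has finite image, and the finite-index kernel fixes a point of the support, yielding a finite orbit. One is therefore reduced to excluding \emph{minimal actions without an invariant measure} (proximal $C^1$ actions). In the high-genus and high-rank regimes this can be approached through Theorem~\ref{theorem:main}: decomposing $\Sigma_g$ into two subsurfaces of genus $\geq 3$ (so $g \geq 6$), or writing $F_n = F_a * F_b$ with $a,b \geq 3$ (so $n \geq 6$), furnishes, after passing to a finite-index subgroup, a product $H \times G \leq \Gamma$ satisfying the cohomological hypotheses, so that $\rho(H)$ or $\rho(G)$ is finite cyclic; a finite cyclic group of circle diffeomorphisms has a finite orbit, which one then propagates across the commuting factor by descending to the quotient circle. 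For $\Aut(F_n)$ and $\Out(F_n)$ with $n \geq 10$ the same mechanism runs with property~(T) factors of rank $\geq 5$, which is exactly the input behind Theorem~\ref{theorem:10}.

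The main obstacle is the small-parameter range, genus $3,4,5$ for surfaces and $4 \leq n \leq 9$ for $\Aut(F_n)$ and $\Out(F_n)$. There no splitting into two factors of the required size exists, property~(T) is either unavailable ($n = 4$) or insufficient in $C^1$, and finite abelianization of finite-index subgroups is open, notably for $\Aut(F_4)$ and $\Out(F_4)$. For these cases Step~(A) demands a genuinely $C^1$ substitute for Navas' theorem \cite{Navas}, excluding faithful proximal $C^1$ actions directly; the higher-regularity arguments of \cite{BKK} and \cite{Navas} rely on Kopell's Lemma or on $C^{1+\alpha}$ estimates with no $C^1$ counterpart. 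I therefore expect the conjecture to hinge on two independent advances: a $C^1$ analogue of Navas' theorem, which would both settle Step~(A) in the small cases and upgrade every ``non-faithful'' conclusion of this paper (including Theorem~\ref{theorem:10}) to ``finite cyclic''; and the full strength of Ivanov's conjecture and its $\Aut/\Out$ analogues at low genus and rank, which would settle Step~(B).
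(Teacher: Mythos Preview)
The statement you are addressing is a \emph{conjecture}; the paper does not prove it and offers no argument beyond the informal remarks surrounding its statement. Your proposal correctly recognizes this and frames itself as conditional, isolating the two missing ingredients the paper itself points to: a $C^1$ analogue of Navas' theorem (to exclude proximal actions) and the full strength of Ivanov's conjecture and its $\Aut/\Out$ analogues (to run Thurston stability on every finite-index subgroup). In that sense your diagnosis of the obstacles matches the paper's.

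Your Step~(B) is correct and is exactly the mechanism behind Lemma~\ref{lemma:trivial} and Lemma~\ref{lemma:apply}. One point in your Step~(A) deserves sharpening, however. Even in the high-genus or high-rank regime, invoking Theorem~\ref{theorem:main} only tells you that $\rho(H)$ or $\rho(G)$ is finite for the commuting subgroups $H,G \leq \Gamma$ coming from a splitting; it does not produce a finite orbit for $\Gamma$ itself, and ``propagating across the commuting factor by descending to the quotient circle'' reaches at best $H \times G$, not the ambient $\Gamma$, since neither $H$ nor $G$ is normal in $\Gamma$. This is exactly why the paper's theorems conclude only that the action is \emph{not faithful} rather than that the image is finite (see Remark~\ref{remark} and the sentence preceding the conjecture). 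Upgrading non-faithfulness to a finite $\Gamma$-orbit would require either controlling the normal closure of $[H,H]$ in $\Gamma$, or an independent argument ruling out proximal $C^1$ actions of $\Gamma$ directly---and the latter is precisely the missing $C^1$ Navas-type input you already flagged. So your high-genus sketch of Step~(A) does not actually bypass that input; the gap is uniform across all parameter ranges, not just the small ones.
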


In other words, finite index subgroups of $\Mod(\Sigma_g)$, $\Aut(F_n)$, and $\Out(F_n)$ should behave like lattices in higher rank Lie groups when considering $C^1$ actions on the circle (see Theorem 1.1 in \cite{Ghys2}, for instance).

\begin{remark}
There exists a finite index subgroup of $\Aut(F_3)$ that has infinite abelianization (see Proposition 9.5 in \cite{EH}). So, finite index subgroups of $\Aut(F_3)$ do not behave like Kazhdan groups  with respect to this property. The results in \cite{BKK} imply that any $C^{1+bv}$ action of this finite index subgroup on the circle cannot be faithful. However, nothing is known about $C^1$ actions of this group on the circle.

Along the same lines, there exists infinitely many level $m$ congruence subgroups $\Mod(\Sigma_2)[m]$ of the mapping class group of a closed and oriented surface of genus two such that  $H^1(\Mod(\Sigma_2)[m], \mathbb{Z})$ is not trivial (see \cite{Mcarthy}). The result in  \cite{KKR} implies that any $C^{1+\tau}$ action, with $\tau >0$, cannot be faithful. Again, nothing is known about $C^1$ actions of these finite index subgroups on the circle.
\end{remark}

\subsection*{Acknowledgements}
Conversations with Andy Putman were invaluable in the preparation of this article. 

\section{Preliminaries} \label{section:preliminaries}

\begin{definition}
By an \textit{action} of a group $G$ we mean a homomorphism $\alpha: G \to \textrm{Diff}_+^1(S^1)$, where $\textrm{Diff}_+^1(S^1)$ is the group of orientation preserving $C^1$-diffeomorphims of the circle. However, we will suppress this notation and simply use $g$ instead of $\alpha(g)$.\end{definition}

We now present several results that will be used in the proof of Theorem~\ref{theorem:mcg} and Theorem~\ref{theorem:main}.

\subsection{Thurston's Stability Lemma}

\begin{theorem}[Thurston \cite{Thurston}] \label{theorem:Thurston}
Let $G$ be a finitely generated group acting on $\mathbb{R}^n$ with a global fixed point $x.$ If the action is $C^1$ and $Dg(x)$ is the identity for all $g \in G$, then either there is a nontrivial homomorphism of $G$ into $\mathbb{R}$ or $G$ acts trivially.
\end{theorem}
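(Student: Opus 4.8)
The plan is to assume $G$ does not act trivially and to manufacture a nontrivial homomorphism $G \to \mathbb{R}$. Normalizing so that $x = 0$ and fixing a finite generating set $g_1,\dots,g_m$, I would work with the displacement maps $\delta_g(y) := g(y) - y$. The hypothesis $Dg(0) = I$ is exactly the statement that $D\delta_g(0) = 0$, so for any $g$ the mean value inequality gives $\|\delta_g(y) - \delta_g(z)\| \le \omega_g(r)\,\|y-z\|$ whenever $y,z$ lie in the ball $B_r$ of radius $r$ about $0$, where $\omega_g(r) := \sup_{B_r}\|Dg - I\|$ tends to $0$ as $r \to 0$. This Lipschitz-with-vanishing-constant property of $\delta_g$ near the fixed point is the only place the $C^1$ hypothesis enters, and it is what I would lean on throughout.

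Next I would rescale. After reducing to a global fixed point near which the germ of the action is nontrivial — if $G$ pointwise fixes a whole neighborhood of $0$, one replaces $0$ by a point on the frontier of $\operatorname{Fix}(G)$, or of the support of the action, at which the derivative hypothesis is inherited — I can choose $x_i \to 0$ with $x_i \ne 0$ and $a_i := \max_{1\le j\le m}\|\delta_{g_j}(x_i)\| > 0$. Using the exact relation $\delta_{gh}(y) = \delta_g(h(y)) + \delta_h(y)$ together with the estimate above, an induction on word length shows $\|\delta_g(x_i)\| \le C_g\, a_i$ for every $g \in G$, with $C_g$ depending only on the word length of $g$. Hence the vectors $v_i(g) := \delta_g(x_i)/a_i \in \mathbb{R}^n$ stay bounded, and since $G$ is countable a diagonal subsequence makes $v_i(g) \to v(g)$ for every $g \in G$ simultaneously. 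Because $\max_j\|v_i(g_j)\| \equiv 1$, we get $\max_j\|v(g_j)\| = 1$, so $v$ is not identically zero.

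Then I would check that $v \colon G \to \mathbb{R}^n$ is additive. Setting $y_i := h(x_i)$, we have $y_i \to 0$ and $\|y_i - x_i\| = \|\delta_h(x_i)\| \le C_h a_i$, so
\[
\delta_{gh}(x_i) \;=\; \delta_g(y_i) + \delta_h(x_i) \;=\; \delta_g(x_i) + \delta_h(x_i) + \bigl(\delta_g(y_i) - \delta_g(x_i)\bigr),
\]
and the bracketed error is bounded by $\omega_g(r_i)\,\|y_i - x_i\| \le \omega_g(r_i)\,C_h\,a_i = o(a_i)$, since $r_i \to 0$ forces $\omega_g(r_i) \to 0$. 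Dividing by $a_i$ and passing to the limit along the chosen subsequence gives $v(gh) = v(g) + v(h)$. Finally, choosing a linear functional $\lambda \colon \mathbb{R}^n \to \mathbb{R}$ that does not annihilate some nonzero $v(g_{j_0})$, the composition $\psi := \lambda \circ v$ is a homomorphism $G \to \mathbb{R}$ with $\psi(g_{j_0}) \ne 0$, which is what we want.

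The step I expect to be the crux is not any single estimate but coordinating the two limits: one needs the displacement bounds $\|\delta_g(x_i)\| = O(a_i)$ to hold with constants independent of $i$ (depending only on word length) so that one diagonal subsequence defines a function on all of $G$, and simultaneously one needs $\omega_g(r_i) \to 0$ to kill every cross term $\delta_g(y_i) - \delta_g(x_i)$ — this is precisely where a merely $C^0$ action would break down. A secondary, bookkeeping-level obstacle is the initial reduction to a fixed point with nontrivial germ, which I would dispose of by passing to the boundary of the fixed-point set, or of the support, of the action.
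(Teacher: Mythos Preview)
The paper does not supply its own proof of this statement: Theorem~\ref{theorem:Thurston} is quoted from \cite{Thurston} and invoked only as a black box (to derive Lemma~\ref{lemma:trivial}), so there is nothing in the paper to compare your argument against. That said, your sketch is a correct reconstruction of Thurston's original rescaling proof --- the cocycle identity $\delta_{gh}=\delta_g\circ h+\delta_h$, the uniform $O(a_i)$ bounds on $\delta_g(x_i)$ obtained by induction on word length from $D\delta_g(0)=0$, the diagonal extraction of $v(g)=\lim \delta_g(x_i)/a_i$, the verification of additivity via the $o(a_i)$ error term, and the final composition with a linear functional are exactly the steps in \cite{Thurston}. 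Your handling of the preliminary reduction is also sound: on $\operatorname{int}\operatorname{Fix}(G)$ each $g$ is the identity, hence $Dg\equiv I$ there, so by continuity of $Dg$ the derivative hypothesis is inherited at any boundary point $x'$ of that interior; such an $x'$ lies in $\operatorname{Fix}(G)$ but not in $\operatorname{int}\operatorname{Fix}(G)$, which is precisely what guarantees a sequence $x_i\to x'$ with $a_i>0$.
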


We will need the following result which is a direct consequence of Thurston's Stability Lemma (Theorem~\ref{theorem:Thurston}).

\begin{lemma} \label{lemma:trivial}
Let $G$ be a finitely generated group with a $C^1$ orientation preserving action on the circle. If $G$ acts with a global fixed point and $H^1(G, \mathbb{R}) = 0$, then $G$ acts trivially.
\end{lemma}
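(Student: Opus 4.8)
The plan is to reduce the statement to Thurston's Stability Lemma (Theorem~\ref{theorem:Thurston}) applied at a suitable fixed point on the circle. Let $x \in S^1$ be a global fixed point of the $C^1$ action $\alpha: G \to \Diff_+^1(S^1)$. Cutting the circle open at $x$, the action restricts to a $C^1$ action of $G$ on the interval $[0,1]$ (equivalently, on $\mathbb{R}$ after lifting) fixing the endpoint $0$; this is where the hypothesis that the action is orientation preserving is used, since it guarantees the action descends to a genuine action on the cut-open interval rather than swapping ends. Now consider the derivative homomorphism $D_x: G \to \mathbb{R}^{>0}$, $g \mapsto Dg(x)$, where $\mathbb{R}^{>0}$ denotes the multiplicative group of positive reals (the chain rule makes this a homomorphism, and positivity comes from orientation preservation). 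Since $\mathbb{R}^{>0} \cong \mathbb{R}$ is abelian and torsion-free, $D_x$ factors through $G/[G,G]$ and in fact through $H_1(G;\mathbb{Z}) \otimes \mathbb{R} = H^1(G,\mathbb{R})^\vee$-type data; more concretely, the hypothesis $H^1(G,\mathbb{R}) = 0$ means $\operatorname{Hom}(G, \mathbb{R}) = 0$, so composing $D_x$ with the isomorphism $\log: \mathbb{R}^{>0} \to \mathbb{R}$ gives a homomorphism $G \to \mathbb{R}$ which must therefore be trivial. Hence $Dg(x) = 1$ for all $g \in G$.

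With $Dg(x) = 1$ for all $g$, the hypotheses of Theorem~\ref{theorem:Thurston} are satisfied for the action on $\mathbb{R}$ (or on a neighborhood of $x$ in $S^1$ identified with an interval) with global fixed point $x$: the action is $C^1$, $G$ is finitely generated, and all derivatives at $x$ are the identity. Thurston's Stability Lemma then yields the dichotomy: either there is a nontrivial homomorphism $G \to \mathbb{R}$, or $G$ acts trivially on a neighborhood of $x$. The first alternative is ruled out exactly by $H^1(G,\mathbb{R}) = 0$, so $G$ must act trivially on a neighborhood of the fixed point.

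Finally I would promote ``acts trivially near $x$'' to ``acts trivially on all of $S^1$''. Let $U$ be the maximal open set on which $G$ acts trivially; it is nonempty and $G$-invariant. If $U \neq S^1$, pick a boundary point $y$ of $U$; then $y$ is also a global fixed point (being a limit of fixed points, and each $g$ is continuous), and repeating the argument above at $y$ — applying the $H^1$ vanishing and Thurston again — shows $G$ acts trivially on a neighborhood of $y$, contradicting maximality of $U$. Hence $U = S^1$ and the action is trivial. The only mild subtlety, and the step I would be most careful about, is the very first one: ensuring the cut-open action is a bona fide $C^1$ interval action so that the derivative at the fixed point is well-defined as a one-sided derivative and the chain rule computation of $D_x$ as a homomorphism is legitimate; this is routine given orientation preservation but deserves an explicit sentence.
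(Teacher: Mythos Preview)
Your proof is correct and follows essentially the same route as the paper: use $H^1(G,\mathbb{R})=0$ to kill the log-derivative homomorphism at the global fixed point, then invoke Thurston's Stability Lemma. Your third paragraph is unnecessary, though---Thurston's conclusion (as stated in Theorem~\ref{theorem:Thurston}) is already global, not merely local, so once you have cut the circle open at $x$ and verified $Dg(x)=1$ for all $g$, Thurston gives triviality on the whole interval (hence on all of $S^1$) in one stroke; the maximality argument is superfluous.
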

\begin{proof}
Let $x$ be the global fixed point. It suffices to show that $g'(x) = 1$ for all $g \in G.$ So consider the homomorphism $L : G \to \mathbb{R}$ defined by $L(g) = \log (g'(x))$. Since $H^1(G, \mathbb{R}) = 0$, this must be the trivial homomorphism, which implies that $g'(x) = 1$ for all $g \in G.$
\end{proof}

\subsection{Rotation numbers}

The subject matter of this subsection is well known.  The interested reader may refer to \cite{Ghys}  for more details.

\begin{definition}
Let $G$ be a finitely generated subgroup of the orientation preserving homeomorphisms of the circle and let $\mu$ be a $G$-invariant probability measure.
The \textit{mean rotation number homomorphism} is the map $\rho: G \to \mathbb{R}/\mathbb{Z}$ defined by
\[
g \to \int_{S^1} (\tilde{g} - Id) \, d \tilde{\mu} \, \, (\textrm{Mod} \, 1), 
\]
where $\tilde{g}$ and $\tilde{\mu}$ are lifts of $g$ and $\mu$ to the real line and the integral is over a single fundamental domain. 
\end{definition}

The fact that this map is a homomorphism follows easily from the assumption that $G$ preserves $\mu.$ Note that the mean rotation number of a homeomorphism is the same as the translation number of a circle homeomorphism, which was originally defined by Poincar\'e.

\begin{proposition}
Let $G$ be a finitely generated group with an orientation preserving action on the circle and let $\mu$ be a $G$-invariant probability measure. The element $g \in G$ acts with a fixed point if $\rho(g) = 0.$
\end{proposition}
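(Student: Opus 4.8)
The plan is to argue by contradiction using the lift of $g$ to the real line, exploiting the fact that a $G$-invariant measure is in particular $g$-invariant. First I would fix a lift $\tilde g : \mathbb{R} \to \mathbb{R}$ of $g$ and a lift $\tilde\mu$ of $\mu$ as in the definition, so that $\tilde g(x+1) = \tilde g(x) + 1$, the measure $\tilde\mu$ is invariant under integer translations, and $\tilde\mu$ assigns mass $1$ to each fundamental domain $[a,a+1)$. Since $\rho(g) = 0$ in $\mathbb{R}/\mathbb{Z}$, the integral $\int (\tilde g - Id)\, d\tilde\mu$ over a fundamental domain equals some integer $k$; replacing $\tilde g$ by the lift $\tilde g - k$ (which is again a lift of $g$) we may assume $\int (\tilde g - Id)\, d\tilde\mu = 0$.

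Next I would observe that the function $\varphi(x) = \tilde g(x) - x$ is continuous and $1$-periodic, because $\tilde g$ commutes with $x \mapsto x+1$; consequently the value of $\int (\tilde g - Id)\, d\tilde\mu$ is independent of the chosen fundamental domain. A fixed point of $g$ on $S^1$ corresponds precisely to a zero of $\varphi$, equivalently to a fixed point of $\tilde g$. Suppose, for contradiction, that $g$ has no fixed point on the circle. Then $\varphi$ never vanishes, and being continuous and periodic it has constant sign and is bounded away from zero: there is $\delta > 0$ with $\varphi \geq \delta$ everywhere or $\varphi \leq -\delta$ everywhere. Integrating over a fundamental domain against the probability measure $\tilde\mu$ then gives $\left| \int (\tilde g - Id)\, d\tilde\mu \right| \geq \delta > 0$, contradicting the normalization from the previous step. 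Hence $\varphi$ has a zero, and the corresponding point of $S^1$ is fixed by $g$.

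I expect no serious obstacle here; the only point requiring a little care is the bookkeeping with lifts — checking that $\tilde g - k$ is again a legitimate lift of $g$ and that the integral of $\tilde g - Id$ over a fundamental domain does not depend on the choice of domain, which follows from the periodicity of $\varphi$ together with the translation invariance of $\tilde\mu$. Alternatively, one can phrase the argument by remarking that $\rho(g)$ coincides with the classical Poincar\'e translation number of $g$ — the mean rotation number is independent of the chosen invariant measure, and any $g$-invariant measure computes it — and then invoke the classical fact that a circle homeomorphism has vanishing translation number exactly when it has a fixed point; but the self-contained argument above is shorter in the present context.
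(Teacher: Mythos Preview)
Your argument is correct and is essentially the standard proof of this classical fact. Note, however, that the paper does not actually supply a proof of this proposition: it is stated without proof in the preliminaries section, with the remark that the material is well known and a reference to Ghys's survey \cite{Ghys}. Your self-contained contradiction argument via the periodic displacement function $\varphi = \tilde g - \mathrm{Id}$ is exactly the usual one found in such references, so there is nothing to compare beyond observing that you have filled in what the paper left to the literature.
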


\section{$C^1$ actions on the circle} \label{section:proofs}

\subsection{Action of the commutator subgroup}

An immediate consequence of the previous proposition is the following observation.
\begin{corollary}
Let $G$ be a finitely generated group with an orientation preserving action on the circle and let $\mu$ be a $G$-invariant probability measure. If $H_1(G,\mathbb{Z}) = 0$, then $G$ acts with a global fixed point.
\end{corollary}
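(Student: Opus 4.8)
The plan is to use $H_1(G,\mathbb{Z})=0$ to force the mean rotation number homomorphism to vanish, conclude that every element of $G$ has a fixed point, and then promote this to a common fixed point using the invariant measure $\mu$. First, since $H_1(G,\mathbb{Z})=0$ we have $G=[G,G]$, so every homomorphism from $G$ to an abelian group is trivial; in particular the mean rotation number homomorphism $\rho\colon G\to\mathbb{R}/\mathbb{Z}$ attached to $\mu$ is trivial, and the preceding proposition then says that each $g\in G$ acts with a fixed point. The real content is to upgrade this to the assertion that $G$ fixes a common point, and here $\mu$ is essential: a group with trivial $H_1$ can act on $S^1$ so that every element fixes a point without there being any global fixed point, so we must exploit $\mu$.

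I would split according to whether $\mu$ has an atom. If $\mu$ has an atom, set $c=\max_{x\in S^1}\mu(\{x\})$; this maximum is attained and positive since for every $\varepsilon>0$ there are only finitely many atoms of mass at least $\varepsilon$. Then $A=\{x\in S^1:\mu(\{x\})=c\}$ is a finite, non-empty, $G$-invariant subset of $S^1$. Because the action preserves orientation it preserves the cyclic order of $A$, so $G$ acts on $A$ through a homomorphism into the cyclic group of cyclic-order-preserving permutations of $A$; since $H_1(G,\mathbb{Z})=0$ this homomorphism is trivial, so $G$ fixes every point of $A$, and we are done in this case.

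If $\mu$ has no atom, I would use the standard monotone semiconjugacy to rotations: reading through lifts to $\mathbb{R}$, the map $\pi\colon S^1\to S^1$ given by $\pi(x)=\mu([x_0,x))$ for a fixed basepoint $x_0$ is continuous, non-decreasing of degree one, and satisfies $\pi\circ g=R_{\rho(g)}\circ\pi=\pi$ for all $g\in G$, where $R_\theta$ denotes the rotation by $\theta$ (see \cite{Ghys}). The fibers of $\pi$ containing more than one point are exactly the closures of the connected components of $S^1\setminus\operatorname{supp}(\mu)$, so a point of $\operatorname{supp}(\mu)$ lies in such a fiber only if it is an endpoint of one of these (countably many) components. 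Since $\mu$ has no atom, $\operatorname{supp}(\mu)$ has no isolated point, hence is perfect and therefore uncountable; so some $x_1\in\operatorname{supp}(\mu)$ satisfies $\pi^{-1}(\pi(x_1))=\{x_1\}$. Then $\pi(g x_1)=\pi(x_1)$ for every $g\in G$ forces $g x_1=x_1$, so $x_1$ is a global fixed point.

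The main obstacle is precisely this passage from ``each element has a fixed point'' to ``$G$ has a global fixed point'': verifying the semiconjugacy identity $\pi\circ g=R_{\rho(g)}\circ\pi$ and the description of the non-singleton fibers of $\pi$, and checking that $\operatorname{supp}(\mu)$ is perfect so that a singleton fiber actually exists. The atomic case, though logically separate, is routine once one notices that orientation-preservation reduces the action on a finite invariant set to a cyclic one.
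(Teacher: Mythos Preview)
Your argument is correct, but it is considerably more elaborate than the paper's. The paper dispatches the passage from ``every element has a fixed point'' to ``there is a global fixed point'' in one line: once $\rho(g)=0$ for every $g$, it invokes the standard fact that for an orientation preserving circle homeomorphism $g$ with a fixed point, any $g$-invariant probability measure $\mu$ satisfies $\operatorname{supp}(\mu)\subset\mathrm{Fix}(g)$. (Reason: each component $I$ of $S^1\setminus\mathrm{Fix}(g)$ is $g$-invariant and $g$ acts on $I$ without fixed points, so the arcs $[g^n(x),g^{n+1}(x))$ partition $I$ into infinitely many pieces of equal $\mu$-measure, forcing $\mu(I)=0$.) Hence every point of $\operatorname{supp}(\mu)$ is fixed by every $g\in G$, and we are done with no case split.

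Your atomic/non-atomic dichotomy and the semiconjugacy argument do work, and they have the virtue of being self-contained, but they reprove this containment by an indirect route. The semiconjugacy $\pi$ you build collapses exactly the complementary intervals of $\operatorname{supp}(\mu)$, so finding a singleton fiber in $\operatorname{supp}(\mu)$ is essentially locating a point of $\operatorname{supp}(\mu)$ away from those gaps---which is automatic once you know $\operatorname{supp}(\mu)\subset\mathrm{Fix}(g)$ directly. In short: both proofs are valid; the paper's is shorter because it isolates the key dynamical fact about invariant measures and fixed sets rather than rebuilding it through the devissage of $\pi$.
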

\begin{proof}
Since $G$ is perfect and the target of  $\rho$, the rotation number homomorphism, is an abelian group, $\rho$ must be trivial. The  proposition above implies that every element acts with a fixed point. The support of $\mu$ is contained in the fixed point set of each element in $G$, and therefore, $G$ must have a global fixed point.
\end{proof}

\begin{corollary} \label{corollary:finite}
Let $G$ be a finitely generated group with an orientation preserving action on the circle and let $\mu$ be a $G$-invariant probability measure. If $H^1(G,\mathbb{R}) = 0$, then the commutator subgroup $[G,G]$ has a global fixed point. 
\end{corollary}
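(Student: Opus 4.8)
The plan is to run the argument used for the preceding corollary, but inside $[G,G]$ rather than inside $G$. The only feature of that argument that is really needed is that the target $\mathbb{R}/\mathbb{Z}$ of the mean rotation number homomorphism $\rho$ is abelian, so that $\rho$ annihilates every commutator; hence $\rho$ restricted to $[G,G]$ is trivial, and by the preceding proposition every element of $[G,G]$ acts with a fixed point. (In fact the hypothesis $H^1(G,\mathbb{R})=0$ is not needed to get a global fixed point for $[G,G]$; it becomes relevant only when Corollary~\ref{corollary:finite} is fed into Lemma~\ref{lemma:trivial} in the proof of Theorem~\ref{theorem:main}, where one also wants the relevant commutator subgroups to act trivially once they fix a point, which requires their first cohomology to vanish.)

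The second ingredient is the standard fact that an orientation preserving circle homeomorphism $h$ that preserves $\mu$ and has a fixed point must fix $\operatorname{supp}(\mu)$ pointwise, i.e. $\operatorname{supp}(\mu)\subseteq\operatorname{Fix}(h)$. This is the step I would spell out, since it is where the actual content lies. The reasoning is purely topological: on each connected component $(a,b)$ of $S^1\setminus\operatorname{Fix}(h)$ the map $h$ has no fixed point, so for $x_0\in(a,b)$ the orbit $\{h^n(x_0)\}_{n\in\mathbb{Z}}$ is strictly monotone with $h^n(x_0)\to a$ and $h^n(x_0)\to b$ as $n\to-\infty$ and $n\to+\infty$; thus the intervals $[h^n(x_0),h^{n+1}(x_0))$, $n\in\mathbb{Z}$, are pairwise disjoint, have union $(a,b)$, and all carry the same $\mu$-measure by invariance of $\mu$. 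Infinitely many disjoint sets of equal measure inside a probability space forces that common measure to be $0$, so $\mu((a,b))=0$; summing over the countably many components gives $\mu(S^1\setminus\operatorname{Fix}(h))=0$, and $\operatorname{supp}(\mu)\subseteq\operatorname{Fix}(h)$ because $\operatorname{Fix}(h)$ is closed.

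Applying this to every $g\in[G,G]$ yields $\operatorname{supp}(\mu)\subseteq\bigcap_{g\in[G,G]}\operatorname{Fix}(g)=\operatorname{Fix}([G,G])$, and since $\mu$ is a probability measure its support is nonempty, so $[G,G]$ has a global fixed point; note that no finite generation of $[G,G]$ is used. I do not expect a genuine obstacle here: the statement is essentially the previous corollary with "$G$ perfect" replaced by "argue inside $[G,G]$". The only place demanding a little care is the support lemma in the degenerate cases where $\operatorname{Fix}(g)$ is a single point or a Cantor set, but the argument above handles these uniformly, and it uses nothing beyond continuity, so the $C^1$ hypothesis plays no role at this stage.
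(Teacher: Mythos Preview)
Your proof is correct and follows essentially the same route as the paper: vanishing of $\rho$ on $[G,G]$ because the target is abelian, the proposition giving a fixed point for each element, and the support of $\mu$ lying in every such fixed point set. You spell out the support argument in more detail than the paper does, and your remark that the hypothesis $H^1(G,\mathbb{R})=0$ is not actually used in this corollary is correct.
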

\begin{proof}
The commutator subgroup plays the role of the perfect group in the proof of the corollary above. For every $g \in [G, G]$, $\rho(g) = 0.$  The proposition above implies that every element in $[G,G]$ acts with a fixed point. Recall that the support of the invariant measure $\mu$ is contained in the fixed point set of every element in $G.$ So, the commutator subgroup acts with a global fixed point.
\end{proof}

\begin{lemma} \label{lemma:apply}
Let $G$ be a finitely generated group with an orientation preserving action on the circle and let $\mu$ be a $G$-invariant probability measure. If $H^1(G, \mathbb{R}) = H^1([G,G],\mathbb{R}) = 0$, the action of $G$ factors through an action of a finite cyclic group.
\end{lemma}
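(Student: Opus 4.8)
The plan is to combine Corollary~\ref{corollary:finite} with Lemma~\ref{lemma:trivial} and then control the quotient by a rotation number argument. First I would invoke Corollary~\ref{corollary:finite}: since $H^1(G,\mathbb{R}) = 0$, there is a $G$-invariant probability measure $\mu$ (any action on the circle admits one when $H_1(G;\mathbb{Z})$ is finite, via amenability of $\mathbb{Z}$-by-finite, or more simply because $\rho$ lands in a group on which $G$ acts trivially), and the commutator subgroup $[G,G]$ has a global fixed point $x \in S^1$. Now apply Lemma~\ref{lemma:trivial} to the group $[G,G]$: it is finitely generated (here one uses that $G$ is finitely generated and $H_1(G;\mathbb{Z})$ is finite, so $[G,G]$ has finite index in $G$ and hence is finitely generated), it acts with the global fixed point $x$, and $H^1([G,G],\mathbb{R}) = 0$ by hypothesis. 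Lemma~\ref{lemma:trivial} then forces $[G,G]$ to act trivially on $S^1$.

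Once $[G,G]$ acts trivially, the action of $G$ factors through the abelianization $G/[G,G] = H_1(G;\mathbb{Z})$, which is a finitely generated abelian group. Because $H^1(G,\mathbb{R}) = 0$, this abelianization is finite. So $G$ acts on $S^1$ through a finite abelian group $A$. The remaining step is to upgrade "finite abelian" to "finite cyclic": a finite group acting faithfully by orientation preserving homeomorphisms of the circle is cyclic. The cleanest way is to average a metric (or use the invariant measure $\mu$, which for a finite group we may take with full support) to conjugate the $A$-action to an action by rigid rotations; a finite group of rotations of $S^1$ is cyclic. Hence the image of $G$ in $\mathrm{Diff}^1_+(S^1)$ is a finite cyclic group, which is exactly the assertion that the action factors through an action of a finite cyclic group.

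I would expect the main obstacle to be the bookkeeping around finite generation of $[G,G]$ and the existence of the invariant measure: strictly speaking, $H^1(G,\mathbb{R}) = 0$ only says $H_1(G;\mathbb{Z})$ has no free part, but one needs it to be \emph{finite} to conclude $[G,G]$ is finitely generated and to guarantee an invariant measure. For a finitely generated group $H_1(G;\mathbb{Z})$ is finitely generated, so $H^1(G,\mathbb{R}) = H_1(G;\mathbb{Z}) \otimes \mathbb{R} = 0$ does give finiteness; this should be stated explicitly. The remaining pieces — Thurston stability via Lemma~\ref{lemma:trivial}, and the classification of finite subgroups of $\mathrm{Homeo}_+(S^1)$ as cyclic — are standard, so the argument is really an assembly of the preceding results rather than anything new.
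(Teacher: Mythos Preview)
Your argument is correct and matches the paper's proof essentially step for step: use the invariant measure and $H^1(G,\mathbb{R})=0$ to get a global fixed point for $[G,G]$ via Corollary~\ref{corollary:finite}, apply Lemma~\ref{lemma:trivial} (Thurston stability) together with $H^1([G,G],\mathbb{R})=0$ and finite generation of $[G,G]$ to make $[G,G]$ act trivially, then note the quotient is finite abelian and hence cyclic as a subgroup of $\Homeo_+(S^1)$. The only unnecessary detour is your concern about producing the invariant measure $\mu$: it is given in the hypothesis of the lemma, so there is nothing to establish there.
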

\begin{proof}
The arguments above imply that the set of global fixed points for the commutator $[G, G]$ is not empty. Also note that $[G, G]$ is a finite index subgroup of a finitely generated group, and so, it too is finitely generated. Now, Lemma \ref{lemma:trivial} implies that the action of the commutator subgroup is trivial. So, the action of $G$ factors through an action of $G/[G, G]$, a finite abelian group. Since every finite group acting on the circle that preserves orientation is cyclic (see \cite{Ghys} and other reference therein), the action of $G$ factors through an action of a finite cyclic group.
\end{proof}

\subsection{Hyperbolic fixed points}

The proof of Theorem~\ref{theorem:main} relies heavily on the following result, which guarantees the existence of an element with a finite fixed point set in the absence of an invariant probability measure.

\begin{theorem}[Deroin, Kleptsyn, and Navas \cite{DNK}]
Let $G$ be a countable group with an orientation preserving $C^1$ action on the circle.  If there is no $G$-invariant probability measure for the action, then there exists an element $g \in G$ that only has hyperbolic fixed points. In particular, $g$ has a nonempty finite set of fixed points. 
\end{theorem}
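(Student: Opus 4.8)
The plan is to run the standard theory of stationary measures for random walks on $\mathrm{Diff}^1_+(S^1)$, since the hypothesis excludes precisely the invariant measures that would obstruct contraction. First I would fix a symmetric probability measure $\nu$ on $G$ with full support (so its support generates $G$) and with weights decaying fast enough to make the derivative cocycle integrable, and produce a $\nu$-stationary measure $\mu$ on $S^1$, i.e. a measure with $\mu = \sum_{h} \nu(h)\, h_*\mu$; such a $\mu$ exists by a Krylov--Bogolyubov argument on the compact space of probability measures on $S^1$. The assumption that there is no $G$-invariant probability measure guarantees that $\mu$ is genuinely stationary and not invariant, so the walk moves $\mu$ around and one expects nontrivial contraction rather than rigidity.

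The analytic heart of the argument is to show that the absence of an invariant measure forces local exponential contraction along almost every sample path. Writing $h_n = g_n \cdots g_1$ for an i.i.d. sequence $(g_i)$ distributed according to $\nu$, I would prove that for $\nu^{\mathbb N}$-almost every $\omega$ there is a point $\xi(\omega) \in S^1$ such that $h_n$ contracts every interval avoiding a neighborhood of $\xi(\omega)$, with $\sup |h_n'|$ decaying to $0$; equivalently, the Lyapunov exponent of the derivative cocycle is strictly negative. The mechanism is a supermartingale estimate: one tracks the evolution of $\log$ of the derivative (or of interval lengths) under the walk and shows it has negative drift whenever no invariant measure is available to balance expansion against contraction. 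This is exactly the step that is delicate in pure $C^1$ regularity, since one cannot invoke bounded distortion as in the $C^{1+\tau}$ setting, so the control of derivatives on short intervals must be extracted directly from the stationary measure rather than from a H\"older modulus.

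With almost sure contraction in hand, I would pass from a probabilistic statement to a single deterministic element. Since the set of contracting sample paths has full measure, a Borel--Cantelli and pigeonhole argument produces a finite admissible word $g = g_{i_k}\cdots g_{i_1} \in G$ and a closed interval $J \subset S^1$ with $g(J) \subset \mathrm{int}(J)$ and $\sup_{J} |g'| < \lambda < 1$ for some $\lambda$. Such a $g$ has a unique fixed point in $J$, and the derivative bound forces it to be attracting and hyperbolic. Because the contraction occurs off a single point, $g$ in fact has North--South type dynamics: its fixed-point set is finite, consisting of alternating attracting and repelling points, each with derivative bounded away from $1$, hence all hyperbolic.

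Finally I would dispose of non-minimality. The stationary measure $\mu$ is supported on a minimal set $\Lambda$; if $\Lambda$ were finite it would be a finite orbit carrying a $G$-invariant probability measure, contrary to hypothesis, so $\Lambda$ is either a Cantor set or all of $S^1$, and the contraction argument above can be run on $\mathrm{supp}(\mu)$ directly. The main obstacle is the second paragraph: establishing genuine exponential contraction in $C^1$ without distortion estimates is the technical core of the Deroin--Kleptsyn--Navas theory, and everything else amounts to extraction of a deterministic element and bookkeeping on top of it.
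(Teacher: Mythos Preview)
The paper does not prove this theorem at all: it is quoted as a black-box input from \cite{DNK} and used in the proof of Theorem~\ref{theorem:main}, with no argument supplied. So there is nothing in the paper to compare your proposal against.

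That said, your outline is broadly the strategy of Deroin--Kleptsyn--Navas: choose a symmetric measure on $G$, build a stationary measure on $S^1$, show the derivative cocycle has negative Lyapunov exponent when no invariant measure exists, and then extract from almost-sure contraction a single word $g$ with hyperbolic fixed points. The one place where your sketch is too quick is the passage from ``there is an interval $J$ with $g(J)\subset\mathrm{int}(J)$ and $\sup_J|g'|<1$'' to ``$g$ has North--South dynamics with all fixed points hyperbolic.'' A single contracting interval gives you one hyperbolic attractor, but it says nothing about the other fixed points of $g$ on $S^1\setminus J$; in general $g$ could a priori have many fixed points outside $J$, some with derivative $1$. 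In the actual argument one uses the contraction along generic sample paths to control \emph{every} complementary arc simultaneously (or works with the inverse walk to get repellers as well), and this is what forces the full fixed-point set to be finite and hyperbolic. Your final paragraph on minimality is fine but not really needed for the statement as quoted.
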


\subsection{Action of products}

The arguments in this section are closely related to the ones in \cite{Parwani} where only perfect groups are considered. 
\begin{theorem:main}
Let $H$ and $G$ be two finitely generated groups such that $H^1(G, \mathbb{R}) = H^1([G,G], \mathbb{R}) =  H^1(H, \mathbb{R}) = H^1([H,H], \mathbb{R}) = 0$. Then for any $C^1$ orientation preserving action of $H \times G$ on the circle, the induced action of either $H \times 1$  or $1 \times G$ factors through an action of a finite cyclic group.
\end{theorem:main}
\begin{proof}
Suppose that there exists a $C^1$ orientation preserving action of $H \times G$ on the circle.  We now consider the induced action of $G$ on the circle. By the result of Deroin, Kleptsyn, and Navas, either there exists a probability measure $\mu$ that is $G$-invariant or there is an element  $g \in G$ that has a finite number of fixed points. We treat these two cases separately.

\smallskip

\noindent
CASE 1: There exists a probability measure $\mu$ that is $G$-invariant.

Lemma \ref{lemma:apply} implies that the action of $G$ factors through an action of a finite cyclic group.

\smallskip

\noindent
CASE 2:  There is an element $g$ in $G$ that has a finite number of fixed points.

In this case, there is a finite set---the set of hyperbolic fixed points---left invariant by the induced action of the group $H.$ This implies that the action of the group $H$ has an invariant measure, and now we may argue as above, with $G$ replaced by $H$, to conclude that the action of  $H$ factors through an action of a finite cyclic group.
\end{proof}

\section{Finite index subgroups of mapping class groups} \label{section:mcg}

We refer the reader to the excelent survey article \cite{Ivanov} or to the text \cite{FM} for an introduction to mapping class groups and to the article \cite{Johnson2}  by D. Johnson for the technical aspects of the Torelli group.

Let $\Sigma_{g}$ be a closed, connected,  and oriented surface of genus $g$.  The Torelli group $\mathcal{I}(\Sigma_g)$ is the kernel of the symplectic action of $\Mod(\Sigma_{g})$ on $H_1(\Sigma_{g}, \mathbb{Z}).$ Thus, $\mathcal{I}(\Sigma_g)$ is defined by
\[
1 \to \mathcal{I}(\Sigma_g) \to \Mod(\Sigma_g) \to Sp(2g, \mathbb{Z}) \to 1
\]
Johnson in \cite{Johnson2} showed that when $g \geq 3$, $\mathcal{I}(\Sigma_g)$ is  generated by bounding pair maps. In the same article, he also proved that $\mathcal{I}(\Sigma_g^1)$ is generated by bounding pair maps, where $\Sigma_g^1$ is the compact oriented surface of genus $g \geq 3$ with exactly one boundary component.

The following results will be crucial to revealing the cohomological properties of finite index subgroups of the mapping class group that contain the Torelli group.

\begin{theorem}[Hain \cite{Hain}]
Let $\Sigma_{g}^n$ be a compact and oriented surface of genus $g \geq 3$ and $n \geq 0$ boundary components. Let $\Gamma$ be a finite index subgroup of the mapping class group $\Mod(\Sigma_{g}^n)$ that contains the Torelli subgroup $\mathcal{I}(\Sigma_g^n)$. Then $H^1(\Gamma, \mathbb{Z}) = 0.$
\end{theorem}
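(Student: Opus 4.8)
The plan is to use the defining extension of the Torelli group together with the five-term exact sequence coming from the Lyndon--Hochschild--Serre spectral sequence. Since $\Gamma$ has finite index in $\Mod(\Sigma_g^n)$ and contains $\mathcal{I}(\Sigma_g^n)$, the image $\overline{\Gamma}$ of $\Gamma$ under the symplectic representation on $H_1(\Sigma_g,\mathbb{Z})$ is a finite index subgroup of $Sp(2g,\mathbb{Z})$, and $\Gamma$ is an extension
\[
1 \to \mathcal{I}(\Sigma_g^n) \to \Gamma \to \overline{\Gamma} \to 1 .
\]
As $\Gamma$ is finitely generated, $H^1(\Gamma,\mathbb{Z}) = \operatorname{Hom}(\Gamma,\mathbb{Z})$ is torsion free, so it is enough to prove $H^1(\Gamma,\mathbb{R}) = 0$. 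With trivial coefficients $\mathbb{R}$ the five-term exact sequence gives
\[
0 \to H^1(\overline{\Gamma},\mathbb{R}) \to H^1(\Gamma,\mathbb{R}) \to H^1(\mathcal{I}(\Sigma_g^n),\mathbb{R})^{\overline{\Gamma}},
\]
so I would reduce the claim to showing that the first and third terms vanish.

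For the first term, I would invoke that $Sp(2g,\mathbb{Z})$ has Kazhdan's property (T) for $g \geq 2$; since property (T) is inherited by finite index subgroups, $\overline{\Gamma}$ has finite abelianization and hence $H^1(\overline{\Gamma},\mathbb{R}) = 0$. (Alternatively one can use the congruence subgroup property for $Sp(2g,\mathbb{Z})$, $g \ge 2$, together with the known abelianizations of symplectic congruence subgroups.)

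For the third term, the essential input is Johnson's computation of the abelianization of the Torelli group: for $g \geq 3$ the group $\mathcal{I}(\Sigma_g^n)$ is finitely generated, and $H_1(\mathcal{I}(\Sigma_g^n),\mathbb{Z})$ modulo its torsion subgroup is isomorphic as an $Sp(2g,\mathbb{Z})$-module to $\wedge^3 H$ (one boundary component), to $\wedge^3 H / H$ (the closed case $n=0$), or more generally to an extension built from $\wedge^3 H$ and copies of $H$, where $H = H_1(\Sigma_g,\mathbb{Z})$. In every case $H^1(\mathcal{I}(\Sigma_g^n),\mathbb{R})$ is the restriction to $Sp(2g,\mathbb{Z})$ of a finite-dimensional rational representation of the algebraic group $Sp(2g)$ containing no trivial subrepresentation. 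Since $\overline{\Gamma}$ is a finite index subgroup of the lattice $Sp(2g,\mathbb{Z})$ in $Sp(2g,\mathbb{R})$, the Borel density theorem shows $\overline{\Gamma}$ is Zariski dense, so its invariants in any such representation equal those of $Sp(2g,\mathbb{R})$, which are zero. Hence $H^1(\mathcal{I}(\Sigma_g^n),\mathbb{R})^{\overline{\Gamma}} = 0$, the exact sequence yields $H^1(\Gamma,\mathbb{R}) = 0$, and therefore $H^1(\Gamma,\mathbb{Z}) = 0$.

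The step I expect to be the main obstacle is identifying $H^1(\mathcal{I}(\Sigma_g^n),\mathbb{R})$ as an $Sp$-module with no invariant vectors: this is precisely where Johnson's structural theorem on the Torelli abelianization is needed, where the hypothesis $g \geq 3$ is essential (for $g=2$ the Torelli group is infinitely generated by Mess, so $H_1$ has infinite rank and the method fails), and where one must verify that the boundary and puncture variants of Johnson's computation still produce representations with vanishing $Sp$-invariants. A secondary point requiring care is ensuring that passing from $Sp(2g,\mathbb{Z})$ to the finite index subgroup $\overline{\Gamma}$ creates no new invariants, which is exactly what Zariski density provides.
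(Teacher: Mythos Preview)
The paper does not give its own proof of this statement; it is quoted as a theorem of Hain and used as a black box. Your outline is essentially Hain's original argument: run the five-term sequence for $1 \to \mathcal{I} \to \Gamma \to \overline{\Gamma} \to 1$, kill $H^1(\overline{\Gamma},\mathbb{R})$ via property~(T) for $Sp(2g,\mathbb{Z})$, and kill the $\overline{\Gamma}$-invariants in $H^1(\mathcal{I},\mathbb{R})$ by combining Johnson's computation of the Torelli abelianization with Borel density. This is correct, and your identification of the delicate point---that Johnson's theorem, and hence the description of $H^1(\mathcal{I},\mathbb{R})$ as an $Sp$-module without invariants, is established for $n \le 1$ and requires additional work for more boundary components---is accurate. (For the applications in this paper only $n=0$ and $n=1$ are used, so the version you have fully justified already suffices.) One cosmetic remark: the torsion-freeness of $H^1(\Gamma,\mathbb{Z}) = \operatorname{Hom}(\Gamma,\mathbb{Z})$ holds for any group, not just finitely generated ones, so that hypothesis is not needed at that step.
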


The next result is contained in the statement of Theorem 1.9 in \cite{EH}. The authors in \cite{EH} prove a more general theorem establishing finite abelianizations of subgroups of finite index containing groups in the Johnson filtration. We only require a special case of their result.

\begin{theorem}[Ershov and He \cite{EH}]
Let $\Sigma_{g}^1$ be a compact and oriented surface of genus $g \geq 12$ and exactly one boundary component. Let $\Gamma$ be a finite index subgroup of the mapping class group $\Mod(\Sigma_{g}^1)$  that contains $[\mathcal{I}(\Sigma_g^1), \mathcal{I}(\Sigma_g^1)]$. Then $\Gamma$ has finite abelianization. In particular, $H^1(\Gamma, \mathbb{R}) = 0.$
\end{theorem}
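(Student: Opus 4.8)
The plan is to reduce the theorem to showing that $H^1(\Gamma, \mathbb{R}) = \operatorname{Hom}(\Gamma, \mathbb{R}) = 0$; since $\Gamma$ is finitely generated, this is equivalent to finiteness of $\Gamma^{\mathrm{ab}}$. Put $N = [\mathcal{I}(\Sigma_g^1), \mathcal{I}(\Sigma_g^1)]$: it is the commutator subgroup of a normal subgroup, hence normal in $\Mod(\Sigma_g^1)$, and by hypothesis $N \subseteq \Gamma$. By Johnson's work, for $g \geq 3$ the abelianization of $\mathcal{I}(\Sigma_g^1)$ is, up to finite index, the symplectic module $\wedge^3 H$ with $H = H_1(\Sigma_g^1, \mathbb{Z})$, and the Johnson kernel---which contains $N$ with finite index---is generated by Dehn twists about separating simple closed curves. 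Write $\bar\Gamma$ for the image of $\Gamma$ in $Sp(2g, \mathbb{Z})$, a finite index subgroup, and $M = (\mathcal{I}(\Sigma_g^1) \cap \Gamma)/N \subseteq \mathcal{I}(\Sigma_g^1)/N$, a finite index subgroup of $\mathcal{I}(\Sigma_g^1)/N$.

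The first and easier step is to show it suffices to prove that every $\phi \in \operatorname{Hom}(\Gamma, \mathbb{R})$ vanishes on $N$. Indeed, if $\phi|_N = 0$ then $\phi$ descends to $\bar\phi \colon \Gamma/N \to \mathbb{R}$, and the restriction $\bar\phi|_M$ is invariant under the conjugation action of $\Gamma/N$ on the abelian group $M$, which factors through $\bar\Gamma$; thus $\bar\phi|_M$ is a $\bar\Gamma$-invariant element of $\operatorname{Hom}(M, \mathbb{R}) \cong (\wedge^3 H \otimes \mathbb{R})^{*}$. Because $Sp(2g, \mathbb{Z})$ is a lattice in $Sp(2g, \mathbb{R})$, the Borel density theorem makes $\bar\Gamma$ Zariski dense, so this vector is $Sp(2g, \mathbb{R})$-invariant; since $\wedge^3 H \otimes \mathbb{R}$ (hence also its dual) has no trivial summand when $g \geq 3$, it vanishes. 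Hence $\bar\phi$ factors through $\bar\Gamma$, and $Sp(2g, \mathbb{Z})$---a higher-rank lattice---has Kazhdan's property (T), which passes to the finite index subgroup $\bar\Gamma$; in particular $\bar\Gamma$ has finite abelianization, so $\bar\phi = 0$ and $\phi = 0$. This step needs only $g \geq 3$.

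It remains to prove that $\phi|_N = 0$ for every homomorphism $\phi \colon \Gamma \to \mathbb{R}$, and this is where the genus hypothesis enters. The results of Ershov and He \cite{EH} show that $N$ is finitely generated once $g \geq 12$---recall that the commutator subgroup of a finitely generated group need not be finitely generated---and their proof of finite abelianization builds on this. Together with Johnson's generation of the Johnson kernel by separating twists, it then suffices to show that the image in $\Gamma^{\mathrm{ab}}$ of each Dehn twist about a separating curve that lies in $\Gamma$---or of its square, which always lies in $N \subseteq \Gamma$---is a torsion element.

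This last claim is the crux, and the step I expect to be the main obstacle. The strategy is to exploit the abundance of relations available on a surface of large genus: when $g$ is large one can place, in pairwise disjoint subsurfaces, lantern relations, chain relations, and handle-addition moves (in the spirit of Putman's subsurface techniques), every factor of which lies in $N \subseteq \Gamma$; such relations rewrite the class of a separating twist in $\Gamma^{\mathrm{ab}} \otimes \mathbb{R}$ as a combination of classes that are conjugate within $\Gamma$ to their own inverses---hence trivial over $\mathbb{R}$---or that cancel in pairs, forcing that class to vanish. The delicate point is the bookkeeping: one must check that enough disjoint subsurfaces of the required genera fit inside $\Sigma_g^1$, and this handle count is precisely what produces the bound $g \geq 12$. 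This combinatorial core of the argument is carried out in \cite{EH}.
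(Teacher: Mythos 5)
The paper does not actually prove this statement: it is imported verbatim as a special case of Theorem~1.9 of \cite{EH}, so there is no internal argument to compare yours against. Judged as an independent proof, your first step is correct and standard (it is essentially Hain's argument): once $\phi\in\operatorname{Hom}(\Gamma,\mathbb{R})$ is known to vanish on $N=[\mathcal{I}(\Sigma_g^1),\mathcal{I}(\Sigma_g^1)]$, the induced functional on $(\mathcal{I}(\Sigma_g^1)\cap\Gamma)/N$ is a $\bar\Gamma$-invariant vector in $(\wedge^3 H\otimes\mathbb{R})^{*}$, Borel density together with the absence of a trivial $Sp(2g,\mathbb{R})$-summand in $\wedge^3 H\otimes\mathbb{R}$ kills it, and property (T) for $Sp(2g,\mathbb{Z})$ finishes. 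That part needs only $g\geq 3$, as you say.

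The gap is in the second step, which is where the theorem actually lives. Two problems. First, the reduction to ``the class of each separating twist (or its square) is torsion in $\Gamma^{\mathrm{ab}}$'' is not justified: $N$ is generated by commutators $[a,b]$ with $a,b\in\mathcal{I}(\Sigma_g^1)$ typically \emph{not} in $\Gamma$, and although $N$ has finite index in the Johnson kernel $\mathcal{K}$ and $\mathcal{K}$ is generated by separating twists, a finite-index subgroup of a group generated by a set $S$ is not generated by powers of elements of $S$; to pass from $\phi(T_c^2)=0$ to $\phi|_N=0$ you would need to extend $\phi|_N$ from $\mathcal{K}\cap\Gamma$ to all of $\mathcal{K}$, which requires an invariance of $\phi|_N$ under conjugation by elements of $\mathcal{K}\setminus\Gamma$ that you have not established. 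Second, and more seriously, the vanishing of the separating-twist classes is precisely the hard content of the Ershov--He theorem, and you dispose of it by writing that the combinatorial core ``is carried out in \cite{EH}''---but \cite{EH} is the source of the very statement being proved, so the argument is circular. Nor is this a harmless black box: $H_1(\mathcal{K},\mathbb{Z})$ is infinite and separating twists have infinite order in it (Morita's Casson-invariant homomorphism), so any proof must genuinely exploit conjugation by the full finite-index subgroup $\Gamma$; the actual argument in \cite{EH} runs through their machinery of groups graded by root systems together with finite generation of the Johnson kernel, not through lantern and chain relations placed in disjoint subsurfaces. As it stands, your proposal is a correct outer reduction wrapped around an unproven core.
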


Let $\Sigma_{g}$ be a closed, connected,  and oriented surface of genus $g \geq 24$ and let $\Gamma$ be a finite index subgroup of the mapping class group $\Mod(\Sigma_{g})$ that contains the Torelli group $\mathcal{I}(\Sigma_g)$.  Now, since the genus of the surface is at least 24, there exists a simple closed separating curve that splits $\Sigma_{g}$ into two subsurfaces $\Sigma_{g_1}^{1}$ and $\Sigma_{g_2}^{1}$, where $g_1$ and $g_2$ are at least 12 and the (sub)surfaces have exactly one boundary component. Let $G = \Mod(\Sigma_{g_1}^{1}) \cap \Gamma$ and let $H = \Mod(\Sigma_{g_2}^{1}) \cap \Gamma.$

\begin{lemma}
The groups $G$ and $H$ are finitely generated and $H^1(G, \mathbb{R}) = H^1([G,G], \mathbb{R}) \\ = H^1(H, \mathbb{R}) = H^1([H,H], \mathbb{R}) = 0.$
\end{lemma}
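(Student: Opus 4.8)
The plan is to realize $G$ and $H$ as finite-index subgroups of the mapping class groups of the two pieces $\Sigma_{g_1}^1$ and $\Sigma_{g_2}^1$, to check that each contains the corresponding Torelli group, and then to invoke the theorems of Hain and of Ershov--He quoted above; by symmetry it suffices to treat $G$, the argument for $H$ being identical with $g_1$ replaced by $g_2$. For finite generation, I would note that the inclusion of the subsurface $\Sigma_{g_1}^1 \hookrightarrow \Sigma_g$---whose complement $\Sigma_{g_2}^1$ is not a disk since $g_2 \geq 12$---induces an injective homomorphism $\Mod(\Sigma_{g_1}^1) \to \Mod(\Sigma_g)$ by the subsurface-inclusion results of Paris--Rolfsen. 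Under this embedding $G = \Mod(\Sigma_{g_1}^1) \cap \Gamma$ is the preimage of $\Gamma$, hence a finite-index subgroup of the finitely presented group $\Mod(\Sigma_{g_1}^1)$, and so it is finitely generated; once we know that $[G,G]$ has finite index in $G$, the same remark applies to $[G,G]$.

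Next I would verify that $\mathcal{I}(\Sigma_{g_1}^1) \subseteq G$. The separating curve used to cut $\Sigma_g$ is null-homologous, and the resulting splitting of homology is an orthogonal (for the algebraic intersection form) direct sum $H_1(\Sigma_g;\mathbb{Z}) \cong H_1(\Sigma_{g_1}^1;\mathbb{Z}) \oplus H_1(\Sigma_{g_2}^1;\mathbb{Z})$. A mapping class supported on $\Sigma_{g_1}^1$ acts as the identity on the second summand, so it lies in $\mathcal{I}(\Sigma_g)$ if and only if it already lies in $\mathcal{I}(\Sigma_{g_1}^1)$; hence $\mathcal{I}(\Sigma_{g_1}^1) \subseteq \mathcal{I}(\Sigma_g) \subseteq \Gamma$, and since $\mathcal{I}(\Sigma_{g_1}^1) \subseteq \Mod(\Sigma_{g_1}^1)$ we obtain $\mathcal{I}(\Sigma_{g_1}^1) \subseteq \Mod(\Sigma_{g_1}^1) \cap \Gamma = G$. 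In particular $[\mathcal{I}(\Sigma_{g_1}^1), \mathcal{I}(\Sigma_{g_1}^1)] \subseteq [G,G]$ as well.

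The cohomology then vanishes in two stages. Since $G$ is a finite-index subgroup of $\Mod(\Sigma_{g_1}^1)$ with $g_1 \geq 12 \geq 3$ that contains $\mathcal{I}(\Sigma_{g_1}^1)$, Hain's theorem gives $H^1(G,\mathbb{Z}) = 0$, hence $H^1(G,\mathbb{R}) = 0$. Since $G$ also contains $[\mathcal{I}(\Sigma_{g_1}^1), \mathcal{I}(\Sigma_{g_1}^1)]$ and $g_1 \geq 12$, the Ershov--He theorem shows moreover that $G^{\mathrm{ab}}$ is finite, so $[G,G]$ has finite index in $G$ and therefore in $\Mod(\Sigma_{g_1}^1)$. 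Applying Ershov--He a second time, now to $[G,G]$---which is a finite-index subgroup of $\Mod(\Sigma_{g_1}^1)$ containing $[\mathcal{I}(\Sigma_{g_1}^1),\mathcal{I}(\Sigma_{g_1}^1)]$---we conclude that $[G,G]^{\mathrm{ab}}$ is finite, and in particular $H^1([G,G],\mathbb{R}) = 0$. Running these same steps for $H$ with $g_2$ in place of $g_1$ completes the proof.

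The step I expect to need the most care is the containment $\mathcal{I}(\Sigma_{g_1}^1) \subseteq G$: one must be precise about the definition of the Torelli group of a surface with one boundary component, about the injectivity of the subsurface-inclusion homomorphism on mapping class groups, and about the compatibility of the two Torelli conditions under the symplectic splitting of $H_1(\Sigma_g)$ determined by the separating curve. The other point worth flagging is that the finite index of $[G,G]$ in $\Mod(\Sigma_{g_1}^1)$ is not available a priori; it is precisely the first application of Ershov--He (to $G$, giving that $G^{\mathrm{ab}}$ is finite) that legitimizes the second application (to $[G,G]$), the hypothesis ``contains $[\mathcal{I},\mathcal{I}]$'' being automatically passed from $G$ down to $[G,G]$.
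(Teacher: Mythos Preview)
Your argument is correct and follows the same skeleton as the paper: show $G$ has finite index in $\Mod(\Sigma_{g_1}^1)$, show $\mathcal{I}(\Sigma_{g_1}^1)\subseteq G$, apply Hain to get $H^1(G,\mathbb{R})=0$, deduce $[G,G]$ has finite index and contains $[\mathcal{I}(\Sigma_{g_1}^1),\mathcal{I}(\Sigma_{g_1}^1)]$, and apply Ershov--He to $[G,G]$. The only substantive difference is how you obtain $\mathcal{I}(\Sigma_{g_1}^1)\subseteq G$: the paper argues via Johnson's theorem that bounding pair maps generate $\mathcal{I}(\Sigma_{g_1}^1)$ for genus $\geq 3$ (every bounding pair map in $\Sigma_{g_1}^1$ is a bounding pair map in $\Sigma_g$, hence lies in $\mathcal{I}(\Sigma_g)\subseteq\Gamma$), whereas you argue directly from the symplectic splitting $H_1(\Sigma_g)\cong H_1(\Sigma_{g_1}^1)\oplus H_1(\Sigma_{g_2}^1)$ that any homologically trivial class supported in the subsurface is homologically trivial on $\Sigma_g$. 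Both work; yours avoids citing Johnson's generation result. One minor redundancy: once Hain gives $H^1(G,\mathbb{Z})=0$ and $G$ is finitely generated, $G^{\mathrm{ab}}$ is already finite, so your separate appeal to Ershov--He for $G$ itself is unnecessary---the paper passes directly from Hain to the finite index of $[G,G]$.
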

\begin{proof}
These results will only be established for $G$, and then the same arguments will apply in the case of $H$. First note that since $\Gamma$ has finite index in $\Mod(\Sigma_{g})$, $G = \Mod(\Sigma_{g_1}^{1}) \cap \Gamma$ has finite index in  $\Mod(\Sigma_{g_1}^{1}).$ This also implies that $G$ is finitely generated (since mapping class groups are finitely generated). Also, because $\Gamma$ contains $\mathcal{I}(\Sigma_g)$, it contains all the generating bounding pair maps. This means that $\Mod(\Sigma_{g_1}^{1}) \cap \Gamma$ contains all the bounding pair maps in $\Sigma_{g_1}^{1}.$ Since the bounding pair maps generate the Torelli group $\mathcal{I}(\Sigma_{g_1}^{1})$ when the genus is at least 3 (see \cite{Johnson2}), $G$ contains the Torrelli group $\mathcal{I}(\Sigma_{g_1}^{1})$. So, $G$ is a finite index subgroup of $\Mod(\Sigma_{g_1}^{1})$ that contains its Torelli group. By the result of Hain, $H^1(G, \mathbb{Z}) = 0.$ It now follows that  $H^1(G, \mathbb{R}) = 0.$

Since $H^1(G, \mathbb{R}) = 0,$ the commutator subgroup $[G, G]$ is a finite index subgroup of $G.$ Furthermore, $[G, G]$ contains $[I(\Sigma_{g_1}^{1}),  I(\Sigma_{g_1}^{1})]$ because $G$ contains $ I(\Sigma_{g_1}^{1}).$ Hence, $[G, G]$ is a finite index subgroup of $\Mod(\Sigma_{g_1}^{1})$ that contains $[I(\Sigma_{g_1}^{1}),  I(\Sigma_{g_1}^{1})]$, and now the result of Ershov and He implies that $H^1([G,G], \mathbb{R}) = 0.$
\end{proof}

\begin{theorem:mcg}
Let $\Sigma_{g}$ be a closed, connected, and oriented surface of genus $g \geq 24$ and  let $\Gamma$ be a finite index subgroup of the mapping class group $\Mod(\Sigma_{g})$ that contains the Torelli group $\mathcal{I}(\Sigma_g)$. Then any orientation preserving $C^1$ action of $\Gamma$ on the circle cannot be faithful.
\end{theorem:mcg}
\begin{proof}
We may split $\Sigma_{g}$ into two subsurfaces $\Sigma_{g_1}^{1}$ and $\Sigma_{g_2}^{1}$, where $g_1$ and $g_2$ are at least 12. Let $G = \Mod(\Sigma_{g_1}^{1}) \cap \Gamma$ and let $H = \Mod(\Sigma_{g_2}^{1}) \cap \Gamma.$ The lemma above implies that $H^1(G, \mathbb{R}) = H^1([G,G], \mathbb{R}) = H^1(H, \mathbb{R}) =  H^1([H,H], \mathbb{R}) = 0.$ Now, the arguments in Theorem \ref{theorem:main} show that either $[G, G]$ or $[H, H]$ acts trivially. In both cases, the action on the circle cannot be faithful.
\end{proof}

\begin{remark}
The kernel of the action is ``large'' in the sense that both $[G, G]$ and $[H, H]$ are finite index subgroups of the mapping class group of a surface with genus at least 12 and one boundary component.
\end{remark}

The short exact sequence 
\[
1 \to \mathcal{I}(\Sigma_g) \to \Mod(\Sigma_g) \xrightarrow{\Psi} Sp(2g, \mathbb{Z}) \to 1
\]
suggests a simple way to manufacture finite index subgroups in $\Mod(\Sigma_g)$---simply ``pullback'' a finite index subgroup in $Sp(2g, \mathbb{Z}).$ Any finite index subgroup constructed in this manner will contain $\mathcal{I}(\Sigma_g).$

The level $m$ congruence subgroup $Sp(2g, \mathbb{Z})[m]$ of $Sp(2g, \mathbb{Z})$ is defined to be the kernel of the reduction homomorphism Mod $m$ (when $m \geq 2$):
\[
Sp(2g, \mathbb{Z})[m] = ker(Sp(2g, \mathbb{Z}) \to Sp(2g, \mathbb{Z}/m\mathbb{Z})).
\]
The level $m$ congruence subgroup $\Mod(\Sigma_g)[m]$ of $\Mod(\Sigma_g)$, with $m \geq 2$ and $g \geq 1$, is defined to be the preimage $\Psi^{-1} (Sp(2g, \mathbb{Z})[m]).$  

This gives us an immediate corollary of Theorem \ref{theorem:mcg}.

\begin{corollary}
Let $\Sigma_{g}$ be a closed, connected, and oriented surface of genus $g \geq 24.$ Any orientation preserving $C^1$ action of the level $m$ congruence subgroup $\Mod(\Sigma_g)[m]$ on the circle cannot be faithful.
\end{corollary}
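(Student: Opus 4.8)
The plan is to simply verify that the level $m$ congruence subgroup $\Mod(\Sigma_g)[m]$ satisfies the two hypotheses of Theorem~\ref{theorem:mcg}, namely that it has finite index in $\Mod(\Sigma_g)$ and that it contains the Torelli group $\mathcal{I}(\Sigma_g)$; the conclusion is then immediate. There is no genuine obstacle here, as this is a direct corollary; the only content is in unwinding the definitions.

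First I would observe that $Sp(2g,\mathbb{Z})[m]$ has finite index in $Sp(2g,\mathbb{Z})$, since it is by definition the kernel of the reduction homomorphism $Sp(2g,\mathbb{Z}) \to Sp(2g,\mathbb{Z}/m\mathbb{Z})$ and the target group $Sp(2g,\mathbb{Z}/m\mathbb{Z})$ is finite. Since $\Psi\colon \Mod(\Sigma_g) \to Sp(2g,\mathbb{Z})$ is surjective, the preimage $\Mod(\Sigma_g)[m] = \Psi^{-1}(Sp(2g,\mathbb{Z})[m])$ then has the same (finite) index in $\Mod(\Sigma_g)$ that $Sp(2g,\mathbb{Z})[m]$ has in $Sp(2g,\mathbb{Z})$.

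Next, because $\Mod(\Sigma_g)[m]$ is a preimage under $\Psi$, it automatically contains $\ker\Psi = \mathcal{I}(\Sigma_g)$. Thus, with $g \geq 24$, the subgroup $\Gamma = \Mod(\Sigma_g)[m]$ is a finite index subgroup of $\Mod(\Sigma_g)$ containing the Torelli group, so Theorem~\ref{theorem:mcg} applies verbatim and shows that any orientation preserving $C^1$ action of $\Mod(\Sigma_g)[m]$ on the circle cannot be faithful. (One could equally just invoke the preceding lemma and the splitting of $\Sigma_g$ used in the proof of Theorem~\ref{theorem:mcg}, but appealing to the theorem itself is cleanest.) As with Theorem~\ref{theorem:mcg}, the remark about the ``large'' kernel applies: the commutator subgroups of $\Mod(\Sigma_{g_1}^1)\cap \Mod(\Sigma_g)[m]$ and $\Mod(\Sigma_{g_2}^1)\cap \Mod(\Sigma_g)[m]$, one of which acts trivially, are finite index subgroups of mapping class groups of surfaces of genus at least $12$ with one boundary component.
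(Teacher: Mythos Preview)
Your proof is correct and follows exactly the approach the paper intends: the corollary is stated as an immediate consequence of Theorem~\ref{theorem:mcg}, and you have simply spelled out why $\Mod(\Sigma_g)[m]$ has finite index and contains $\mathcal{I}(\Sigma_g)$, which is precisely the verification the paper leaves implicit.
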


\begin{remark}
Another class of finite index subgroups of $\Mod(\Sigma_g)$ that contain $\mathcal{I}(\Sigma_g)$ are the \textit{spin mapping class groups} (see \cite{Harer}).  Theorem \ref{theorem:mcg} applies to these finite index subgroups as well.
\end{remark}

\section{Finite index subgroups of $\Aut(F_n)$}

We refer the reader to \cite{V}, and references therein, for an introduction to $\Aut(F_n)$ and $\Out(F_n).$

Consider the group of automorphisms $\Aut(F_n)$ of the free group $F_n$ of rank $n \geq 8.$ Let $\Gamma$ be a finite index subgroup of $\Aut(F_n)$ that contains the group of IA-automorphisms $IA_n$, the subgroup of automorphisms of $F_n$ which act trivially on its 
abelianization (``Identity on Abelianization" automorphisms). Thus, $IA_n$ is defined by
\[
1 \to IA_n \to \Aut(F_n) \to SL(n, \mathbb{Z}) \to 1
\]

Now, $\Aut(F_n)$ contains $\Aut(F_4) \times \Aut(F_{n-4})$, where $n- 4 \geq 4.$ Let $G = \Gamma \cap \Aut(F_4)$ and $H = \Gamma \cap \Aut(F_{n-4})$.

The following result is a special case of  Theorem 9.4 in \cite{EH}.

\begin{theorem}[Ershov and He \cite{EH}]
Let $n \geq 3$ and let $\Gamma$ be a finite index subgroup of $\Aut(F_n)$ such that $[\Gamma, \Gamma]$ contains $[IA_n, IA_n ]$. Then $\Gamma$ has finite abelianization. In particular, $H^1(\Gamma, \mathbb{R}) = 0.$
\end{theorem}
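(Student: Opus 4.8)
The plan is to establish the formally stronger statement $H^1(\Gamma,\mathbb{R}) = \operatorname{Hom}(\Gamma,\mathbb{R}) = 0$; since $\Gamma$ has finite index in the finitely generated group $\Aut(F_n)$ it is itself finitely generated, so $H_1(\Gamma,\mathbb{Z})$ is a finitely generated abelian group and the vanishing of its free rank is precisely the asserted finiteness. To control $\operatorname{Hom}(\Gamma,\mathbb{R})$ I would use the extension $1 \to IA_n \to \Aut(F_n) \to \mathrm{GL}_n(\mathbb{Z}) \to 1$: putting $\Gamma_0 = \Gamma \cap IA_n$ and letting $\bar\Gamma$ be the image of $\Gamma$ in $\mathrm{GL}_n(\mathbb{Z})$, both have finite index in $IA_n$ and in $\mathrm{GL}_n(\mathbb{Z})$ respectively, and we have $1 \to \Gamma_0 \to \Gamma \to \bar\Gamma \to 1$. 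The two external inputs I would rely on are: (i) every finite-index subgroup of $\mathrm{GL}_n(\mathbb{Z})$ with $n \ge 3$ has finite abelianization (Kazhdan's property (T), or the Bass--Milnor--Serre congruence subgroup property) --- this is what pins down the hypothesis $n \ge 3$; and (ii) the Magnus--Andreadakis--Bachmuth computation of the abelianization of $IA_n$ (obtained independently by Cohen--Pakianathan, Farb, and Kawazumi), which as a $\mathrm{GL}_n$-representation reads $H_1(IA_n;\mathbb{Q}) \cong \operatorname{Hom}(H,\wedge^2 H)$ with $H = \mathbb{Q}^n$ the standard representation.

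Now fix $\phi \in \operatorname{Hom}(\Gamma,\mathbb{R})$ and aim to show $\phi = 0$. This is exactly where the hypothesis is used: $\phi$ annihilates $[\Gamma,\Gamma]$, hence annihilates $[IA_n,IA_n]$, which lies in $[\Gamma,\Gamma]$ by assumption and also in $\Gamma_0$; so $\phi|_{\Gamma_0}$ factors through $\Gamma_0/[IA_n,IA_n]$, a finite-index subgroup of $IA_n/[IA_n,IA_n] = H_1(IA_n;\mathbb{Z})$. A homomorphism from a finite-index subgroup of a finitely generated abelian group into $\mathbb{R}$ extends uniquely to the rationalization, so $\phi|_{\Gamma_0}$ determines a well-defined class $\widetilde\phi \in H^1(IA_n;\mathbb{R})$, and since $\phi$ is defined on all of $\Gamma$ the class $\widetilde\phi$ is invariant under the conjugation action of $\bar\Gamma$. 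At this point I would combine (ii) with Borel density: a finite-index subgroup of $\mathrm{SL}_n(\mathbb{Z})$ is Zariski dense in $\mathrm{SL}_n$ and the pointwise stabilizer of a vector in an algebraic representation is Zariski closed, so $\widetilde\phi \in H^1(IA_n;\mathbb{R})^{\bar\Gamma} \subseteq H^1(IA_n;\mathbb{R})^{\mathrm{SL}_n} \cong \operatorname{Hom}_{\mathrm{SL}_n}(\wedge^2 H, H)$, which vanishes for $n \ge 3$ since the standard representation is not a constituent of $\wedge^2 H$ there (Schur's lemma). Hence $\phi|_{\Gamma_0} = 0$, so $\phi$ descends to a homomorphism $\bar\Gamma \to \mathbb{R}$; but $\bar\Gamma$ has finite index in $\mathrm{GL}_n(\mathbb{Z})$ with $n \ge 3$, so $\operatorname{Hom}(\bar\Gamma,\mathbb{R}) = 0$ by (i), and therefore $\phi = 0$.

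I expect the real obstacle to be input (ii): describing the abelianization of $IA_n$ as a $\mathrm{GL}_n$-module and checking that $H$ and $\wedge^2 H$ share no irreducible constituent is where all of the free-group geometry actually enters, and it is the precise analogue of Johnson's deep computation $H_1(\mathcal{I}(\Sigma_g);\mathbb{Q}) \cong \wedge^3 H / H$ that does the corresponding work in the surface setting. A second point I would want to emphasize rather than merely check is that the hypothesis $[\Gamma,\Gamma] \supseteq [IA_n,IA_n]$ is genuinely indispensable: without it one could only conclude that $\phi|_{\Gamma_0}$ factors through the abelianization of $\Gamma_0$ itself, which is of possibly infinite rank and not understood and whose $\bar\Gamma$-invariants need not vanish --- this is exactly the gap that leaves the unconditional statement open and that the more general machinery of Ershov and He is built to cross for deeper terms of the Johnson filtration. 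Finally, the routine bookkeeping to confirm is that $[IA_n,IA_n] \subseteq \Gamma_0$ (immediate from $[IA_n,IA_n]\subseteq[\Gamma,\Gamma]\subseteq\Gamma$), that $\Gamma_0$ has finite index in $IA_n$, and that the $\bar\Gamma$-action appearing on $H^1(IA_n;\mathbb{R})$ really is the restriction of the natural action of $\mathrm{GL}_n(\mathbb{Z})$, so that Borel density applies.
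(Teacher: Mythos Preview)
The paper does not prove this theorem: it is stated as a special case of Theorem~9.4 of \cite{EH} and simply cited without argument, so there is no ``paper's own proof'' to compare against.

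That said, your outline is a correct and essentially self-contained proof of the cited statement. The logical skeleton---reduce to $\phi|_{\Gamma_0}$ via the extension $1\to\Gamma_0\to\Gamma\to\bar\Gamma\to1$, use the hypothesis $[IA_n,IA_n]\subseteq[\Gamma,\Gamma]$ to push $\phi|_{\Gamma_0}$ down to a class in $H^1(IA_n;\mathbb{R})$, observe $\bar\Gamma$-invariance, kill it with Borel density plus the Andreadakis/Cohen--Pakianathan/Farb/Kawazumi description $H_1(IA_n;\mathbb{Q})\cong H^*\otimes\wedge^2 H$, then finish with property~(T) for $\bar\Gamma\le\mathrm{GL}_n(\mathbb{Z})$---is sound, and you have identified exactly where each hypothesis enters. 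Two small points worth tightening: (a) when you check $(H^*\otimes\wedge^2 H)^{\mathrm{SL}_n}=0$, the case $n=3$ deserves a word since there $\dim H=\dim\wedge^2 H=3$; the point is that $\wedge^2 H\cong H^*$ and $H\not\cong H^*$ as $\mathrm{SL}_3$-modules; (b) the $\bar\Gamma$-invariance of $\widetilde\phi$ uses that $\Gamma$ normalises $\Gamma_0$ and that the image of $\Gamma_0$ spans $H_1(IA_n;\mathbb{R})$, both of which you note but could state explicitly. Your diagnosis that the hypothesis $[IA_n,IA_n]\subseteq[\Gamma,\Gamma]$ is what makes the argument run (and that dropping it to $[IA_n,IA_n]\subseteq\Gamma$ is genuinely harder, cf.\ the next cited theorem requiring $n\ge4$) is exactly right.
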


The authors in \cite{EH} also prove the following (see Theorem 1.9 in \cite{EH}).

\begin{theorem}[Ershov and He \cite{EH}]
Let $n \geq 4$ and let $\Gamma$ be a finite index subgroup of $\Aut(F_n)$ such that $\Gamma$ contains $[IA_n, IA_n ]$. Then $\Gamma$ has finite abelianization. In particular, $H^1(\Gamma, \mathbb{R}) = 0.$
\end{theorem}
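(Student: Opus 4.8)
The plan is to reduce the statement to two essentially independent inputs: Kazhdan's property (T) in the large-rank regime, and a relative spectral-gap argument for the remaining small rank, which is where the hypothesis on $IA_n$ is genuinely needed. Throughout, since $\Gamma$ has finite index in the finitely generated group $\Aut(F_n)$ it is itself finitely generated, so finite abelianization is equivalent to $H^1(\Gamma,\mathbb{R}) = \mathrm{Hom}(\Gamma,\mathbb{R}) = 0$, and this is what I would establish. Because the restriction map $H^1(\Gamma,\mathbb{R}) \to H^1(\Gamma_0,\mathbb{R})$ is injective for any finite-index subgroup $\Gamma_0$ (transfer), and because $[IA_n,IA_n]$ is characteristic in the normal subgroup $IA_n$ and therefore equals its own $\Aut(F_n)$-conjugates and so lies in every conjugate of $\Gamma$, I may replace $\Gamma$ by its normal core and assume $\Gamma \trianglelefteq \Aut(F_n)$ is normal of finite index with $[IA_n,IA_n] \subseteq \Gamma$.

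For $n \geq 5$ I would simply invoke the property (T) of $\Aut(F_n)$ recorded in the introduction (from \cite{KNO}): property (T) passes to finite-index subgroups and forbids nontrivial homomorphisms to $\mathbb{R}$, so $H^1(\Gamma,\mathbb{R}) = 0$ with no hypothesis on $IA_n$ required at all. The entire difficulty is thus concentrated in the case $n = 4$, where property (T) of $\Aut(F_4)$ is unavailable and the assumption $[IA_4,IA_4]\subseteq\Gamma$ must be used in an essential way.

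For $n = 4$ I would exploit the extension $1 \to IA_4 \to \Aut(F_4) \xrightarrow{\Psi} GL_4(\mathbb{Z}) \to 1$. Set $J = \Gamma \cap IA_4$, a finite-index, $\Aut(F_4)$-invariant subgroup of $IA_4$ containing $[IA_4,IA_4]$, and $\bar B = \Psi(\Gamma)$, a finite-index subgroup of $GL_4(\mathbb{Z})$. The linear direction is handled by representation theory: in the five-term homology sequence of $1 \to J/[IA_4,IA_4] \to \Gamma/[IA_4,IA_4] \to \bar B \to 1$ the term $H_1(\bar B,\mathbb{R})$ vanishes because $SL_4(\mathbb{Z})$ has property (T) (rank $3 \geq 2$), and the coinvariant term vanishes because $(J/[IA_4,IA_4])\otimes\mathbb{R}$ is, by Johnson's computation, a finite-index submodule of $\mathrm{Hom}(H,\wedge^2 H)$ with $H = \mathbb{Q}^4$, an $SL_4$-module containing no trivial subquotient (since $H$ and $\wedge^2 H$ are non-isomorphic irreducibles), so that Zariski density of $\bar B$ forces the $\bar B$-coinvariants to be zero. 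This gives $H_1(\Gamma/[IA_4,IA_4],\mathbb{R}) = 0$, i.e. $\Gamma^{ab}\otimes\mathbb{R}$ is carried entirely by the image of $[IA_4,IA_4]$. It then remains to prove that this image vanishes, equivalently that $[IA_4,IA_4] \subseteq [\Gamma,\Gamma]$ after tensoring with $\mathbb{R}$.

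This last containment is the main obstacle and is the genuine content of the Ershov--He theorem. I would emphasize that a naive lower-central-series commutator calculus is circular: rewriting powers of commutators $[a,b]$ with $a,b\in IA_4$ using the only available group elements $a^m,b^m\in J\subseteq\Gamma$ shows merely that the images of the successive terms $\mathrm{gr}_k(IA_4)$ in $\Gamma^{ab}\otimes\mathbb{R}$ form a constant chain, not that they vanish. To break the circularity one must inject a spectral-gap input, and the route I would follow is the one underlying \cite{EH}: realize the relevant elementary subgroups of $\Aut(F_4)$ as a group graded by a root system of rank at least two, verify the Steinberg-type commutator relations, and deduce a relative property (T) for the pair formed by (a finite-index subgroup of) $\Aut(F_4)$ acting on the abelianized graded pieces of $IA_4$; the resulting gap forces each graded contribution, hence the image of $[IA_4,IA_4]$, to be zero. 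Once $[\Gamma,\Gamma]\supseteq[IA_4,IA_4]$ is secured, one may alternatively quote the companion theorem of Ershov and He stated just above to conclude directly. The hard part is therefore entirely the construction of the root-system grading and the verification of the relations that make the relative property (T) machinery applicable already at $n=4$.
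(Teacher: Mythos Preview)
The paper does not prove this statement at all: it is quoted verbatim as a theorem of Ershov and He, attributed to Theorem~1.9 of \cite{EH}, and used as a black box. There is therefore no ``paper's own proof'' to compare your proposal against.

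Your sketch is a reasonable outline of the ideas that actually go into the Ershov--He argument, but a few remarks are in order. First, your reduction to $n=4$ via property~(T) from \cite{KNO} is logically fine but historically backwards: the Ershov--He paper predates \cite{KNO}, so their proof treats all $n\geq 4$ uniformly by the graded/relative property~(T) machinery you describe at the end, not by invoking Kazhdan's property~(T) for large $n$. Second, you correctly identify that the crux is forcing $[IA_n,IA_n]\subseteq[\Gamma,\Gamma]$ (up to finite index), and you are honest that the naive commutator calculus is circular; but your final paragraph is really just a pointer back to \cite{EH} rather than an independent argument, so as a self-contained proof the proposal is incomplete at exactly the point where the real work lies. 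Third, a small technical slip: the abelianization map $\Aut(F_n)\to GL_n(\mathbb{Z})$ has image $GL_n(\mathbb{Z})$, not $SL_n(\mathbb{Z})$ as in the paper's displayed exact sequence; this is harmless for your purposes since you only need property~(T) for a finite-index subgroup.

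In short: there is nothing in the present paper to compare against, and your proposal is a plausible but ultimately deferential sketch of the cited result rather than a proof.
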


As an immediate corollary of these two results is the following observation.

\begin{corollary}
Let $n \geq 4$ and let $\Gamma$ be a finite index subgroup of $\Aut(F_n)$ such that $\Gamma$ contains $IA_n$. Then $\Gamma$ and $[\Gamma, \Gamma]$ both have finite abelianizations. In particular, $H^1(\Gamma, \mathbb{R}) = H^1([\Gamma, \Gamma], \mathbb{R}) =0.$
\end{corollary}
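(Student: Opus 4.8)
The claim to prove is the corollary stating that for $n \geq 4$ and $\Gamma$ a finite index subgroup of $\Aut(F_n)$ containing $IA_n$, both $\Gamma$ and $[\Gamma,\Gamma]$ have finite abelianization, i.e. $H^1(\Gamma,\mathbb{R}) = H^1([\Gamma,\Gamma],\mathbb{R}) = 0$.

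The plan is to derive both vanishing statements from the two quoted theorems of Ershov and He. The first vanishing, $H^1(\Gamma,\mathbb{R}) = 0$, is almost immediate: since $\Gamma$ contains $IA_n$, it certainly contains $[IA_n, IA_n]$, so the second Ershov–He theorem (the one requiring only that $\Gamma \supseteq [IA_n, IA_n]$ and $n \geq 4$) applies directly and gives that $\Gamma$ has finite abelianization, hence $H^1(\Gamma,\mathbb{R}) = 0$.

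For the second vanishing, $H^1([\Gamma,\Gamma],\mathbb{R}) = 0$, I would first observe that because $H^1(\Gamma,\mathbb{R}) = 0$ the abelianization of $\Gamma$ is finite, so $[\Gamma,\Gamma]$ is a finite index subgroup of $\Gamma$, and therefore also a finite index subgroup of $\Aut(F_n)$. Next I need to check the hypothesis of the first Ershov–He theorem applied to the group $[\Gamma,\Gamma]$ in place of $\Gamma$: namely that $[[\Gamma,\Gamma],[\Gamma,\Gamma]]$ contains $[IA_n, IA_n]$. Since $\Gamma \supseteq IA_n$, we have $[\Gamma,\Gamma] \supseteq [IA_n, IA_n]$, but that is one commutator level short of what is needed. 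The right move is to use that $IA_n$ itself has the property that its commutator subgroup behaves well — more precisely, one should note $[\Gamma,\Gamma]$ contains $[IA_n,\Gamma] \supseteq [IA_n, IA_n]$, and in fact since $IA_n$ is normal in $\Aut(F_n)$ hence in $\Gamma$, and one can show $[IA_n, IA_n] \subseteq [[\Gamma,\Gamma],[\Gamma,\Gamma]]$ by a more careful commutator manipulation, or alternatively invoke that the relevant Ershov–He machinery (Theorem 9.4 with $\Gamma$ replaced by $[\Gamma,\Gamma]$) only genuinely needs $[IA_n,IA_n] \subseteq [\Gamma',\Gamma']$ where $\Gamma' = [\Gamma,\Gamma]$, which follows once we know $IA_n \cap \Gamma'$ is still large. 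The cleanest route is: $\Gamma' = [\Gamma,\Gamma]$ has finite index in $\Gamma$, so $IA_n \cap \Gamma'$ has finite index in $IA_n$; since $IA_n$ has finite abelianization is false in general (indeed $IA_n$ has infinite abelianization), one instead uses that the derived subgroup $[IA_n, IA_n]$ is generated by the relevant commutators of Magnus generators and a direct check shows these lie in $[\Gamma',\Gamma']$.

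The main obstacle will be precisely this commutator bookkeeping: closing the gap between "$[\Gamma,\Gamma]$ contains $[IA_n,IA_n]$" (easy) and "$[[\Gamma,\Gamma],[\Gamma,\Gamma]]$ contains $[IA_n,IA_n]$" (what Theorem 9.4 of \cite{EH} wants of the group $[\Gamma,\Gamma]$). I expect the intended argument sidesteps this by applying Theorem 9.4 to $\Gamma$ directly — its hypothesis is exactly that $[\Gamma,\Gamma]$ contains $[IA_n,IA_n]$, which holds since $\Gamma \supseteq IA_n$ — and that theorem's conclusion already includes, or immediately yields, that $[\Gamma,\Gamma]$ has finite abelianization. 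In that reading, the corollary is just: apply the second Ershov–He theorem to get $H^1(\Gamma,\mathbb{R})=0$, and apply the first Ershov–He theorem (whose hypothesis $[\Gamma,\Gamma] \supseteq [IA_n,IA_n]$ is satisfied) to get that $[\Gamma,\Gamma]$ has finite abelianization, hence $H^1([\Gamma,\Gamma],\mathbb{R})=0$; the two displayed conclusions follow. So the real content is just verifying that the containment $IA_n \subseteq \Gamma$ implies $[IA_n,IA_n] \subseteq [\Gamma,\Gamma]$, which is trivial, and then quoting the two theorems in the correct order.
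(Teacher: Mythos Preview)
Your final summary misreads the first Ershov--He theorem. That theorem says: if $[\Gamma,\Gamma] \supseteq [IA_n,IA_n]$ then \emph{$\Gamma$} has finite abelianization. It does not conclude anything about the abelianization of $[\Gamma,\Gamma]$. So ``apply the first Ershov--He theorem to $\Gamma$ and read off that $[\Gamma,\Gamma]$ has finite abelianization'' is not a valid step, and your whole detour through $[[\Gamma,\Gamma],[\Gamma,\Gamma]] \supseteq [IA_n,IA_n]$ was chasing the wrong hypothesis.

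The intended (and genuinely immediate) argument is the one you almost wrote in your third paragraph but then abandoned: apply the \emph{second} Ershov--He theorem to the group $[\Gamma,\Gamma]$. You already observed that $[\Gamma,\Gamma]$ has finite index in $\Aut(F_n)$ (since $\Gamma$ has finite abelianization) and that $[\Gamma,\Gamma] \supseteq [IA_n,IA_n]$ (since $\Gamma \supseteq IA_n$). Those are exactly the hypotheses of the second theorem with $[\Gamma,\Gamma]$ in the role of $\Gamma$, and its conclusion is precisely that $[\Gamma,\Gamma]$ has finite abelianization. No commutator bookkeeping beyond the trivial inclusion $[IA_n,IA_n] \subseteq [\Gamma,\Gamma]$ is needed.
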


\begin{lemma} \label{lemma:trivial2}
The groups $G$ and $H$ are finitely generated and $H^1(G, \mathbb{R}) = H^1([G,G], \mathbb{R}) \\ =   H^1(H, \mathbb{R}) = H^1([H,H], \mathbb{R}) = 0.$
\end{lemma}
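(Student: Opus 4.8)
The plan is to mirror, essentially verbatim, the argument used for the mapping class group lemma above, with the r\^ole of the Torelli group (and its generating bounding pair maps) played by $IA_n$, and with the Ershov--He corollary stated just above --- a finite index subgroup of $\Aut(F_m)$ containing $IA_m$ has $H^1(\,\cdot\,,\mathbb{R}) = H^1([\,\cdot\,,\cdot\,],\mathbb{R}) = 0$ for $m \geq 4$ --- in place of the results of Hain and of Ershov--He for surfaces. I will establish all four vanishing statements for $G = \Gamma \cap \Aut(F_4)$; the argument for $H = \Gamma \cap \Aut(F_{n-4})$ is identical, using $n - 4 \geq 4$.

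First I would record that $G$ is finitely generated. Since $\Aut(F_n)$ is finitely generated, so is every finite index subgroup of it; in particular $\Aut(F_4)$, viewed inside $\Aut(F_n)$ as the subgroup fixing a chosen set of generators $x_5,\dots,x_n$, is finitely generated. As $\Gamma$ has finite index in $\Aut(F_n)$, the intersection $G = \Gamma \cap \Aut(F_4)$ has finite index in $\Aut(F_4)$, and is therefore finitely generated as well.

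The key point is that $G$ contains $IA_4$. Under the embedding $\Aut(F_4) \hookrightarrow \Aut(F_n)$, an automorphism of $F_4$ extended by the identity on $\langle x_5,\dots,x_n\rangle$ acts on $H_1(F_n,\mathbb{Z}) = \mathbb{Z}^n = \mathbb{Z}^4 \oplus \mathbb{Z}^{n-4}$ block-diagonally: by its original action on the first summand and by the identity on the second. Hence such an element lies in $IA_n$ if and only if it acts trivially on $\mathbb{Z}^4$, i.e. $\Aut(F_4) \cap IA_n = IA_4$. Since $\Gamma \supseteq IA_n$ by hypothesis, $G = \Gamma \cap \Aut(F_4) \supseteq IA_n \cap \Aut(F_4) = IA_4$. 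Thus $G$ is a finite index subgroup of $\Aut(F_4)$ that contains $IA_4$, and the corollary above, applied with rank $4$, gives $H^1(G,\mathbb{R}) = H^1([G,G],\mathbb{R}) = 0$. Running the same argument with $\Aut(F_{n-4})$ in place of $\Aut(F_4)$ --- legitimate precisely because $n - 4 \geq 4$ --- yields $H^1(H,\mathbb{R}) = H^1([H,H],\mathbb{R}) = 0$, which completes the proof.

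I do not expect a real obstacle here: essentially all of the substance has been delegated to the Ershov--He finite-abelianization theorem, and the only thing that must be checked by hand is the identification $\Aut(F_m) \cap IA_n = IA_m$, which is the elementary linear-algebra observation indicated above.
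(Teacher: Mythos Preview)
Your proof is correct and follows essentially the same approach as the paper: show $G=\Gamma\cap\Aut(F_4)$ has finite index in $\Aut(F_4)$ (hence is finitely generated), verify $IA_4\subseteq G$ via the embedding $\Aut(F_4)\hookrightarrow\Aut(F_n)$, and invoke the Ershov--He corollary for $m\geq 4$. Your justification of $\Aut(F_4)\cap IA_n=IA_4$ via the block-diagonal action on $H_1(F_n,\mathbb{Z})$ is in fact more carefully stated than the paper's version.
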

\begin{proof}
These results will only be established for $G$, and then the same arguments will apply in the case of $H$. First note that since $\Gamma$ has finite index in $\Aut(F_n)$, $G = \Aut(F_4) \cap \Gamma$ has finite index in  $\Aut(F_4).$ This also implies that $G$ is finitely generated (since $\Aut(F_n)$ is finitely generated \cite{Nielsen}). Since any automorphism in $\Aut(F_4)$ that acts trivially on its abelianization can be be extended to an automorphism of $\Aut(F_n)$ that acts trivially on the abelianization of $\Aut(F_n)$, $G = \Aut(F_4) \cap \Gamma$ contains $IA_4.$ Now, the corollary above establishes the desired result.
\end{proof}

\begin{theorem:Aut}
If $\Gamma$ is a finite index subgroup of $\Aut(F_n)$, when $n \geq 8$, that contains the subgroup of IA-automorphisms, then any orientation preserving $C^1$ action of $\Gamma$ on the circle  cannot be faithful.
\end{theorem:Aut}
\begin{proof}
The group $\Aut(F_n)$ contains $\Aut(F_4) \times \Aut(F_{n-4})$, where $n- 4 \geq 4.$ Let $G = \Gamma \cap \Aut(F_4)$ and $H = \Gamma \cap \Aut(F_{n-4})$.
 The lemma above implies that $H^1(G, \mathbb{R}) = H^1([G,G], \mathbb{R}) = H^1(H, \mathbb{R}) = 0 = H^1([H,H], \mathbb{R}).$ Now, the arguments in Theorem \ref{theorem:main} show that either $[G, G]$ or $[H, H]$ acts trivially. In both cases, the action on the circle cannot be faithful.
\end{proof}

\begin{remark} \label{remark}
The kernel of the action is ``large'' in the sense that both $[G, G]$ and $[H, H]$ are finite index subgroups of $\Aut(F_k)$,  with $k \geq 4$. \end{remark}

The level $m$ congruence subgroup of $\Aut(F_n)$, for $m \geq 2$, is defined as
\[
\Aut(F_n, m) = ker(\Aut(F_n) \xrightarrow{\Psi} SL(n, \mathbb{Z}) \xrightarrow{\Mod \, m} SL(n, \mathbb{Z}/m\mathbb{Z})).
\]

This gives us an immediate corollary of Theorem \ref{theorem:Aut}.

\begin{corollary}
Any orientation preserving $C^1$ action of the level $m$ congruence subgroup $\Aut(F_n, m)$  on the circle cannot be faithful when $n \geq 8$.
\end{corollary}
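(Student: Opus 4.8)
The plan is to obtain this as a direct consequence of Theorem~\ref{theorem:Aut}: it suffices to check that the level $m$ congruence subgroup $\Aut(F_n,m)$ is a finite index subgroup of $\Aut(F_n)$ that contains the group of IA-automorphisms $IA_n$.

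First I would record the finite index statement. By definition $\Aut(F_n,m)$ is the kernel of the composite homomorphism $\Aut(F_n) \xrightarrow{\Psi} SL(n,\mathbb{Z}) \to SL(n,\mathbb{Z}/m\mathbb{Z})$, where the second arrow is reduction mod $m$. Since $SL(n,\mathbb{Z}/m\mathbb{Z})$ is finite, the image of this composite is a finite group, and hence its kernel $\Aut(F_n,m)$ has finite index in $\Aut(F_n)$.

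Next I would check the containment $IA_n \subseteq \Aut(F_n,m)$. This is immediate from the short exact sequence $1 \to IA_n \to \Aut(F_n) \xrightarrow{\Psi} SL(n,\mathbb{Z}) \to 1$ displayed above: every element of $IA_n$ lies in $\ker\Psi$, so it maps to the identity of $SL(n,\mathbb{Z})$, and a fortiori to the identity of $SL(n,\mathbb{Z}/m\mathbb{Z})$; that is, it lies in $\Aut(F_n,m)$. More generally, any subgroup of $\Aut(F_n)$ pulled back from $SL(n,\mathbb{Z})$ automatically contains $IA_n$.

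Having verified both hypotheses, and since $n \geq 8$, Theorem~\ref{theorem:Aut} applies directly and yields that no orientation preserving $C^1$ action of $\Aut(F_n,m)$ on the circle can be faithful. I do not expect any genuine obstacle here: the statement is a formal corollary, and the only points that merit a word of justification are the finiteness of the index (which rests on the finiteness of $SL(n,\mathbb{Z}/m\mathbb{Z})$) and the containment of $IA_n$ (which is built into the definition of the congruence subgroup as a pullback). If desired, one could also note, in the spirit of Remark~\ref{remark}, that the kernel of such an action is correspondingly large.
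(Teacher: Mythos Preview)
Your proposal is correct and matches the paper's approach exactly: the paper states this as ``an immediate corollary of Theorem~\ref{theorem:Aut}'' with no further proof, and you have simply spelled out the two easy verifications (finite index via finiteness of $SL(n,\mathbb{Z}/m\mathbb{Z})$, and $IA_n \subseteq \Aut(F_n,m)$ from the definition as a pullback) that the paper leaves to the reader.
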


\section{Finite index subgroups of $\Out(F_n)$}

Consider the group of outer automorphisms $\Out(F_n)$ of the free group $F_n$ of rank $n \geq 8.$ Let $\Gamma$ be a finite index subgroup of $\Out(F_n)$ that contains the Torelli  group $\mathcal{T}_n$, the subgroup of outer automorphisms of $F_n$ which act trivially on its 
abelianization. Thus, $\mathcal{T}_n$ is defined by
\[
1 \to \mathcal{T}_n \to \Out(F_n) \to SL(n, \mathbb{Z}) \to 1
\]

\begin{theorem:out} 
If $\Gamma$ is a finite index subgroup of $\Out(F_n)$, when $n \geq 8$, that contains the Torelli group $\mathcal{T}_n$, then any orientation preserving $C^1$ action of $\Gamma$ on the circle  cannot be faithful.
\end{theorem:out}
\begin{proof}
Since $\Out(F_n)$ is defined to be the quotient of $\Aut(F_n)$ by the subgroup of inner automorphisms of $F_n$, there is a natural projection $p$ from $\Aut(F_n)$ onto $\Out(F_n).$ If $\Gamma$ is a finite index subgroup of $\Out(F_n)$ that contains the Torelli group $\mathcal{T}_n$, then the ``pullback'' of $\Gamma$ is a finite index subgroup $\Gamma_A$ of $\Aut(F_n)$ than contains $IA_n.$ So, an action of $\Gamma$ on the circle induces an action of $\Gamma_A$ on the circle. 

The group $\Aut(F_n)$ contains $\Aut(F_4) \times \Aut(F_{n-4})$, where $n- 4 \geq 4.$ Let $G_A = \Gamma_A \cap \Aut(F_4)$ and $H_A = \Gamma_A \cap \Aut(F_{n-4})$. Now define $G = p(G_A)$ and $H = p(H_A).$ First observe that since $H^1(G_A, \mathbb{R}) = 0$ (by Lemma \ref{lemma:trivial2} above), $H^1(G, \mathbb{R}) = 0.$ Similarly, $H^1([G_A,G_A], \mathbb{R}) = 0$ implies that $H^1([G,G], \mathbb{R}) = 0$ as well. Also, these same arguments establish that  $H^1(H, \mathbb{R}) = H^1([H, H], \mathbb{R}) = 0.$ Furthermore, the groups $G$ and $H$ are finitely generated because they are homomorphic images of finitely generated groups.

Now, the arguments in Theorem \ref{theorem:main} show that either $[G, G]$ or $[H, H]$ acts trivially. In both cases, the action on the circle cannot be faithful.\end{proof}

\section{Finite index subgroups of $\Aut(F_n)$ and $\Out(F_n)$, with $n \geq 10$}

Recent results  in \cite{KNO} imply that $\Aut(F_n)$ and $\Out(F_n)$ have Kazhdan's property (T), for all $n\geq5.$ Now, a theorem of Navas establishes that when $\alpha > 1/2$, any $C^{1+\alpha}$ orientation preserving action of a finite index subgroup of $\Aut(F_n)$ or $\Out(F_n)$ must factor through an action of a finite cyclic group when $n \geq 5$ (see \cite{Navas}). Again, nothing is known about $C^1$ actions of \textit{arbitrary} finite index subgroups of $\Aut(F_n)$ and $\Out(F_n)$ on the circle, for $5 \leq n \leq 9$. However, Theorem \ref{theorem:main} implies that any $C^1$ orientation preserving action of a finite index subgroup in $\Aut(F_n)$ or $\Out(F_n)$, when $n \geq 10$, cannot be faithful.

\begin{theorem:10}
Any $C^1$ orientation preserving action of a finite index subgroup in $\Aut(F_n)$ or $\Out(F_n)$ cannot be faithful when $n \geq 10.$
\end{theorem:10}
\begin{proof}
Let $\Gamma$ be a finite index subgroup of $\Aut(F_n)$, with $n \geq 10.$ Now, $\Aut(F_5) \times \Aut(F_{n-5})$ is a subgroup of $\Aut(F_n)$, and so, let $G = \Gamma \cap \Aut(F_5)$ and similarly define $H$ as the intersection of $\Gamma$ with $\Aut(F_{n-5}).$ Results in \cite{KNO} imply that $G$ and $H$ both have Kazhdan's property (T). Since $G$ and $H$ are Kazhdan, it immediately follows that the groups $G$ and $H$ are finitely generated and $H^1(G, \mathbb{R}) = H^1([G,G], \mathbb{R}) =   H^1(H, \mathbb{R}) = H^1([H,H], \mathbb{R}) = 0.$ We may now apply the Theorem \ref{theorem:main} to conclude that the action of $\Gamma$ cannot be faithful. An identical argument establishes the same conclusion when $\Gamma$ is a finite index subgroup of $\Out(F_n)$, for $n \geq 10.$
\end{proof}


\begin{thebibliography}{10}

\bibitem{BKK} H. Baik, S. Kim, T. Koberda, \textit{Unsmmothable group actions on compact 1-manifolds}, J. Eur. Math. Soc. (JEMS) \textbf{21} (2019), no. 8, 2333--2353.

\bibitem{DNK} B.~Deroin, V.~Kleptsyn, and A.~Navas, \textit{Sur la dynamique unidimensionnelle en r\'egularit\'e 
interm\'ediaire}, Acta Math. \textbf{199} (2007), no.~2, 199--262.  

\bibitem{EH} M. Ershov, S. He, \textit{On finiteness properties of the Johnson filterations}, Duke Math. J. \textbf{167}(9) (2018), 1713--1759.

\bibitem{FM} B. Farb and D. Margalit, \textit{A Primer on Mapping Class Groups (Princeton Mathematical Series, 49)}, Princeton University Press, Princeton, NJ, 2012.


\bibitem{Ghys} E.~Ghys, \textit{Groups acting on the circle}, Enseign. Math. (2) \textbf{47} (2001), 329--407.

\bibitem{Ghys2} E. Ghys, \textit{Actions de r\'eseaux sur le cercle}, Invent. Math. 1\textbf{37} (1999), no. 1, 199--231. 

\bibitem{Hain} R. M. Hain, \textit{Torelli groups and geometry of Moduli spaces of curves}, in Current topics in complex
algebraic geometry (Berkeley, CA, 1992/93), 97--143, Cambridge Univ. Press, Cambridge.

\bibitem{Harer} J. L. Harer, \textit{Stability of the homology of the Moduli spaces of Riemann surfaces with spin structure},
Math. Ann. \textbf{287} (1990), no. 2, 323--334.

\bibitem{Ivanov} N.~Ivanov, \textit{Mapping class groups},  Handbook of geometric topology, North-Holland, Amsterdam 2002, 523--633.

\bibitem{Johnson2} D. Johnson, \textit{Homeomorphisms of a surface which act trivially on homology}, Proceedings of the AMS \textbf{75} (1979)119--125.

\bibitem{KNO}  M. Kaluba, D. Kielak, P. Nowak, \textit{On property (T) for $\Aut(F_n)$ and $SL_n(\mathbb{Z})$}, Annals of Math. \textbf{193} (2021),  no. 2, 539--562.

\bibitem{KKR} S. Kim, T. Koberda, C. Rivas, \textit{Virtual critical regularity of mapping class group actions on the circle}, preprint 2021, arXiv:2104.15073 [math.GR]

\bibitem{MW} K. Mann, M. Wolff, \textit{Rigidity of mapping class group actions on $S^1$}, Geom. Topol. \textbf{24} (2020) 1211--1223.

\bibitem{Mcarthy} J. D. McCarthy, \textit{On the first cohomology group of cofinite subgroups in surface mapping class groups}, Topology \textbf{40} (2001), no. 2, 401--418.

\bibitem{Navas} A. Navas, \textit{Actions de groupes de Kazhdan sur le cercle}, Ann. Sci. \'Ecole Norm. Sup.  (4) \textbf{35} (2002), no. 5, 749--758.



\bibitem{Nielsen} J. Nielsen, \textit{Die isomorphismengruppe der freien Gruppen}, Math. Ann. \textbf{91} (1924), 169--209.

\bibitem{Parwani} K. Parwani, \textit{$C^1$ actions of the mapping class group on the circle},  Algebr. Geom. Topol. \textbf{8} (2008), no. 2, 935--944.

\bibitem{Pixton} D. Pixton, \textit{Nonsmoothable, unstable group actions}, Trans. of the AMS \textbf{229} (1977), 259--268.

\bibitem{Thurston} W.~Thurston,  \textit{A generalization of the Reeb stability theorem}, Topology \textbf{13} (1974), 347--352.

\bibitem{V} K. Vogtmann, \textit{Automorphisms of free groups and outer space}, Geometriae Dedicata \textbf{94}, 1--31 (2002).
\end{thebibliography}
\end{document}